\documentclass[12pt, twoside, reqno]{amsart}
\usepackage{amscd}
\usepackage{bbm}
\usepackage{dsfont}
\usepackage{enumerate}
\usepackage{latexsym}
\usepackage{srcltx}
\usepackage{amsfonts}
\usepackage{amssymb}
\usepackage{datetime}
\usepackage{setspace}
\usepackage{enumerate}
\usepackage{amsthm}
\usepackage{graphicx}
\usepackage{epstopdf}

\newtheorem{theorem}{Theorem}[section]
\newtheorem{proposition}[theorem]{Proposition}
\newtheorem{lemma}[theorem]{Lemma}

\theoremstyle{definition}
\newtheorem{example}[theorem]{Example}
\newtheorem{definition}[theorem]{Definition}

\begin{document}
\title{ Spectrum of Weighted Composition Operators \\
Part X \\
The spectrum and essential spectra of weighted automorphisms of the polydisc algebra.}

\author{Arkady Kitover}

\address{Community College of Philadelphia, 1700 Spring Garden St., Philadelphia, PA, USA}

\email{akitover@ccp.edu}

\author{Mehmet Orhon}

\address{University of New Hampshire, 105 Main Street
Durham, NH 03824}

\email{mo@unh.edu}

\subjclass[2010]{Primary 47B33; Secondary 47B48, 46B60}

\date{\today}

\keywords{Weighted composition operators, spectrum, Fredholm spectrum, essential spectra}

\begin{abstract}
  We investigate the spectrum and the essential spectra of weighted automorphisms of the polydisc algebra $\mathds{A}^n$. In the case $n=2$ we provide a detailed and, in most cases, complete description of these spectra.
\end{abstract}

\maketitle

\markboth{Arkady Kitover and Mehmet Orhon}{Spectrum of weighted automorphisms.}

\section{Introduction}

The spectrum of weighted automorphisms of the disc-algebra was described by Kamowitz in~\cite{Kam}. In~\cite{Ki} the first named author obtained a full description of the spectrum of weighted automorphisms of uniform algebras. The paper~\cite{Ki1} contains a description of essential spectra of weighted automorphisms of the algebra $C(K)$. In~\cite{KO} the authors obtained some general results about the spectrum and essential spectra of weighted isometrical endomorphisms of uniform algebras and applied these results to the problem of describing the essential spectra of weighted automorphisms of the disc-algebra. This description is complete in the case of automorphisms generated by elliptic or parabolic Möbius transformations, and partial in the case of hyperbolic transformations (see~\cite[Problem 3.7]{KO}).

The goal of the current paper is to apply the results obtained in~\cite{Ki, Ki1, KO} to the problem of describing essential spectra of weighted automorphisms of the polydisc algebra $\mathds{A}^n$. While our methods can be applied in the case of any $n \in \mathds{N}$, we decided to provide as many details as possible in the case $n=2$. As the reader will see, even in this case the multitude of distinct possibilities is quite large. 

\section{Notations and preliminaries}

The following notations are used throughout the paper.

\noindent $\mathfrak{M}_A$ and $\partial A$ are the space of maximal ideals and the Shilov boundary of
a commutative Banach algebra $A$, respectively.

\noindent Let $A$ be a commutative unital Banach algebra and $U$ be an automorphism of $A$. We will often instead of $U$ write $T_\varphi$, where $\varphi$ is the homeomorphism of $\mathfrak{M}_A$ onto itself generated by $U$.

\noindent $\mathds{N}$ is the set of all positive integers.

\noindent $\mathds{Z}$ is the set of all integers.

\noindent $\mathds{C}$ is the field of all complex numbers.

\noindent We denote by $\mathds{D}$ and by $\mathds{U}$ the closed and the open unit disk, respectively.

\noindent $\mathds{T}$ is the unit circle.

\noindent $\mathds{AN}(r,R)$, where $0 < r < R < \infty$, is the annulus $\{\lambda \in \mathds{C}: r \leq |\lambda| \leq R$\}.

\noindent For $n=2,3, \ldots$, $\mathds{D}^n$, $\mathds{U}^n$, and $\mathds{T}^n$ denote the closed unit polydisc, the open unit polydisc, and the $n$-dimensional unit torus, respectively. 

\noindent All linear spaces are considered over the field $\mathds{C}$.

\noindent $C(K)$ is the space of all complex-valued continuous functions on a Hausdorff compact space $K$ endowed with the standard norm.

\noindent $\mathds{A}^n, n = 2,3, \ldots$ is the polydisc algebra: the algebra of all functions analytic in $\mathds{U}^n$ and continuous in $\mathds{D}^n$.

\noindent It is well known (see e.g~\cite{Ru}) that $\mathfrak{M}_{\mathds{A}^n} = \mathds{D}^n$ and $\partial \mathds{A}^n = \mathds{T}^n$.

\noindent Let $K$ be a compact Hausdorff space and $\varphi$ be a homeomorphism of $K$ onto itself. Then, $\varphi^0(k) = k, k \in K$, and $\varphi^n = \varphi^{n-1} \circ \varphi, n \in \mathds{Z}$.

\noindent Let $K$ be a Hausdorff compact space, $\varphi$ be a continuous map of $K$ into itself, and $w \in C(K)$. Then, $w_1 = w$ and $w_n = w_{n-1}(w \circ \varphi^{n-1}), n = 2,3, \ldots$.

\noindent Let $X$ be a Banach space and $T : X \rightarrow X$ be a continuous linear operator. 

\noindent $\sigma(T)$ is the spectrum of $T$.

\noindent $\rho(T)$ is the spectral radius of $T$. 

\begin{equation*}
 \rho_{min}(T) = \left\{
    \begin{array}{ll}
      1/\rho(T^{-1}, & \hbox{if the operator $T$ is invertible;} \\
      0, & \hbox{otherwise.}
    \end{array}
  \right.
\end{equation*}

\noindent $\sigma_{a.p.}(T)$ is the approximate point spectrum of $T$, i.e.
\begin{equation*}
   \sigma_{a.p.}(T) = \{\lambda \in \mathds{C} : \exists x_n \in X, \|x_n\|=1, Tx_n - \lambda x_n \mathop \rightarrow_{n \rightarrow \infty} 0\}.
\end{equation*}
According to this definition the point spectrum of $T$ is a subset of $\sigma_{a.p.}(T)$.

\noindent $\sigma_r(T) = \sigma(T) \setminus \sigma_{a.p.}(T)$ is the residual spectrum of $T$.

\noindent Recall that a continuous linear operator $T$ on a Banach space $X$
 is called upper semi-Fredholm (respectively, lower semi-Fredholm) if its image $R(T)$ is closed in  $X$ and $\dim \ker T < \infty$ (respectively, if $codim R(T) < \infty$).  

\noindent The operator $T$ is called semi-Fredholm if it is either upper semi-Fredholm or lower semi-Fredholm, and it is called Fredholm if it is both upper semi-Fredholm and lower semi-Fredholm.

\noindent The operator $T$ is called Weil operator if $\dim \ker T = codim R(T) < \infty$.

\noindent $\sigma_{sf}(T)$ is the semi-Fredholm spectrum of $T$, i.e. $\sigma_{sf}(T) = \{\lambda \in \mathds{C}: \lambda I - T$ is not semi-Fredholm$\}$.

\noindent $\sigma_{usf}(T)$ is the upper semi-Fredholm spectrum of $T$, i.e. $\sigma_{usf}(T) = \{\lambda \in \mathds{C}: \lambda I - T$ is not an upper semi-Fredholm operator$\}$. It is well known (see e.g.~\cite{EE}) that
\begin{equation*}
  \begin{split}
  & \sigma_{usf}(T) = \{\lambda \in \mathds{C} : \exists x_n \in X, \|x_n\|=1, Tx_n - \lambda x_n \mathop \rightarrow_{n \rightarrow \infty} 0 ,\\
  & \text{and the sequence} \; x_n \; \text{is singular, i.e. it does not contain} \\
   & \text{a norm convergent subsequence}\}.
  \end{split}
\end{equation*}

\noindent $\sigma_{lsf}(T)$ is the lower semi-Fredholm spectrum of $T$, i.e. $\sigma_{lsf}(T) = \{\lambda \in \mathds{C}: \lambda I - T$ is not a lower semi-Fredholm operator$\}$. 
\begin{equation*}
  \begin{split}
  & \sigma_{lsf}(T) = \{\lambda \in \mathds{C} : \exists x_n^\prime \in X^\prime, \|x_n^\prime\|=1, T^\prime x_n^\prime - \lambda x_n^\prime \mathop \rightarrow_{n \rightarrow \infty} 0 ,\\
  & \text{and the sequence} \; x_n^\prime \; \text{is singular}.
    \end{split}
\end{equation*}

\noindent $\sigma_f(T)$ is the Fredholm spectrum of $T$, i.e. $\sigma_f(T) = \{\lambda \in \mathds{C}: \lambda I - T$ is not a Fredholm operator$\}$.  

\noindent $\sigma_w(T)$ is the Weil spectrum of $T$, i.e. $\sigma_f(T) = \{\lambda \in \mathds{C}: \lambda I - T$ is not a Weil operator$\}$. 

\noindent It is well known (see e.g~\cite{EE}) that
\begin{itemize}
  \item $\sigma_{sf}(T) \subseteq \sigma_{usf}(T) \subseteq \sigma_f(T) \subseteq \sigma_w(T)$,
  \item $\sigma_{usf}(T) = \sigma_{lsf}(T^\prime)$, $\sigma_{lsf}(T) = \sigma_{usf}(T^\prime)$.
\end{itemize}

\noindent In the case when operator $T$ is defined and bounded on distinct Banach spaces, $\sigma(T, X)$ means the spectrum of $T$ on the Banach space $X$. The same agreement relates to all the spectra considered above.

\section{Axillary results} 

Our investigation of essential spectra of weighted automorphisms of the algebra $\mathds{A}^2$ is based on several lemmas stated below.

\begin{lemma} \label{l1} (see~\cite[Proposition 2.8]{KO}
  Let $A$ be a unital uniform algebra. Assume that $\partial A$ has no isolated points. Let $T_\varphi$ be a periodic automorphism of $A$, i.e. there is an $n \in \mathds{N}$ such that $T_\varphi^n = I$. Let $w \in A$ and $T = wT_\varphi$. For any $t \in \mathfrak{M}_A$ let $p(t)$ be the smallest positive integer such that $\varphi^{p(t)}(t) = t$. Then,
\begin{enumerate}
  \item $\sigma(T) = \{\lambda \in \mathds{C}: \exists t \in \mathfrak{M}_A, \lambda^{p(t)} = w_{p(t)}(t)\}$.
  \item $\sigma_{a.p.}(T) = \sigma_{sf}(t) =\{\lambda \in \mathds{C}: \exists t \in \partial A, \lambda^{p(t)} = w_{p(t)}(t)\}$
\end{enumerate}
\end{lemma}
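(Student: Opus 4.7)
The plan is built around the identity $T^n = M_{w_n}$ (the multiplication operator by $w_n$), which follows by induction from the recursion $T^k = w_k T_\varphi^k$ together with $T_\varphi^n = I$. Once this is in hand, the spectral mapping theorem gives $\sigma(T)^n = \sigma(T^n) = w_n(\mathfrak{M}_A)$, yielding the coarse inclusion $\sigma(T) \subseteq \{\lambda : \lambda^n \in w_n(\mathfrak{M}_A)\}$. The refinement in (1) and (2) then comes from an orbit-by-orbit analysis that uses the key observation $w_p(\varphi^j(t)) = w_p(t)$ for each $j$ when $p = p(t)$, so $w_p$ is constant on the $\varphi$-orbit of $t$ and $w_n(t) = w_p(t)^{n/p}$.

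For the inclusion $\sigma(T) \supseteq S$ with $S := \{\lambda : \exists t \in \mathfrak{M}_A,\, \lambda^{p(t)} = w_{p(t)}(t)\}$, I would fix $t$ with $p = p(t)$ and consider the joint evaluation $\pi_t : A \to \mathds{C}^p$ at the orbit $\{t, \varphi(t),\dots,\varphi^{p-1}(t)\}$. This is a unital algebra homomorphism whose kernel is $T$-invariant, and on the quotient $A/\ker\pi_t$ the operator induced by $T$ is similar to the weighted cyclic shift $\tilde{T}$ on $\mathds{C}^p$ whose eigenvalues are precisely the $p$-th roots of $w_p(t)$. So any $\lambda$ with $\lambda^p = w_p(t)$ lies in $\sigma(\tilde{T}) \subseteq \sigma(T)$. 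For the reverse inclusion $\sigma(T) \subseteq S$, I would decompose $A$ into the $T_\varphi$-isotypic components $A = \bigoplus_{k=0}^{n-1} A_k$ (using that $\mathds{C}[\mathds{Z}/n]$ is semisimple), note that $T$ maps $A_k$ into $A_{k-1}$ via multiplication by $\zeta^{k}w$ (with $\zeta = e^{2\pi i/n}$), and for $\lambda \notin S$ construct $(T-\lambda I)^{-1}$ explicitly from the factorization $T^n - \lambda^n I = \prod_{j=0}^{n-1}(T - \lambda\zeta^j I)$. The condition $\lambda \notin S$ translates through this factorization precisely to the invertibility of $M_{w_n - \lambda^n}$ \emph{after} restriction to the relevant isotypic components, so all the "spurious" $n$-th roots of $w_n(t_0)$ that would come from a period-$p < n$ point $t_0$ are cancelled by the matching factors.

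For part (2), the implication $S_\partial \subseteq \sigma_{a.p.}(T) \cap \sigma_{usf}(T)$ (with $S_\partial$ the analogue of $S$ restricted to $t \in \partial A$) comes from constructing singular approximate eigenvectors. Given $t \in \partial A$ with $\lambda^p = w_p(t)$, I would use peak-type functions centered near the orbit of $t$ — available because $\partial A$ has no isolated points, so one can build a sequence $u_n \in A$ with $\|u_n\| = 1$, $|u_n| \le 1$, concentrating near $t$ without converging — and then form linear combinations along the orbit (with coefficients dictated by the eigenvector of $\tilde{T}$) that approximately satisfy $Tf_n \approx \lambda f_n$. Singularity of the sequence prevents norm-convergence and places $\lambda$ in $\sigma_{usf}(T)$, hence in $\sigma_{sf}(T)$. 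The opposite inclusion $\sigma_{a.p.}(T) \subseteq S_\partial$ uses the standard uniform-algebra fact that any approximate eigenfunction can be forced to concentrate on $\partial A$, so pointwise limits along $\partial A$ recover the algebraic relation $\lambda^{p(t)} = w_{p(t)}(t)$ for some $t \in \partial A$. The sandwich $\sigma_{sf}(T) \subseteq \sigma_{a.p.}(T)$ is automatic, and combined with what precedes gives equality.

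The main obstacle is the reverse inclusion in (1): bridging the gap between $\lambda^n \in w_n(\mathfrak{M}_A)$ and $\lambda^{p(t)} = w_{p(t)}(t)$. A point $t_0$ with $p(t_0) = p < n$ only forces $\lambda^p$ to be \emph{some} $(n/p)$-th root of $w_p(t_0)$, not $w_p(t_0)$ itself, so one cannot simply read off membership in $S$ from spectral mapping for $T^n$. The isotypic decomposition outlined above is designed exactly to handle this obstruction; verifying that the inverse formula is bounded (rather than merely formal) is the technical heart of the argument and is where the assumption that $\partial A$ has no isolated points enters through its role in the uniform-algebra structure of $A$.
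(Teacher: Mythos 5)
First, a point of comparison: the paper gives no proof of this lemma at all --- it is imported verbatim from \cite{KO} (Proposition 2.8) --- so your attempt can only be judged on its own terms. The preliminary layer of your argument is sound: $T^n = M_{w_n}$, the cyclic-product identity $w_{p(t)}(\varphi^j(t)) = w_{p(t)}(t)$, the coarse inclusion $\sigma(T) \subseteq \{\lambda : \lambda^n \in w_n(\mathfrak{M}_A)\}$ (commuting factors of the invertible product $\prod_j (T-\zeta^j\lambda) = M_{w_n-\lambda^n}$ are invertible), and the forward inclusion $S \subseteq \sigma(T)$ via the weighted cyclic shift on an orbit (most cleanly seen on the $T'$-invariant span of the point evaluations $\delta_{\varphi^j(t)}$, since eigenvalues of a quotient operator need not lie in $\sigma(T)$ without such an adjoint argument). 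But the step you yourself flag as the technical heart --- the reverse inclusion of (1) --- rests on a false claim. With $A_k = \{f \in A : f\circ\varphi = \zeta^k f\}$ one has $Tf = \zeta^k wf$ for $f \in A_k$, and $wf$ lies in $A_{k-1}$ only if $w\circ\varphi = \zeta^{-1}w$; for a general weight $w \in A$ the product $wf$ is spread over all isotypic components, so the block structure you propose to invert component by component does not exist. Consequently you have no working mechanism for discarding the ``spurious'' $n$-th roots of $w_n(t_0)$ at points with $p(t_0) < n$, and that half of statement (1) remains unproved.

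Part (2) contains a direction error in addition to the hand-waving. Since $\sigma_{sf}(T) \subseteq \sigma_{usf}(T) \subseteq \sigma_{a.p.}(T)$, a singular sequence of approximate eigenvectors places $\lambda$ in $\sigma_{usf}(T)$, and the inference ``hence in $\sigma_{sf}(T)$'' goes the wrong way: to get $\lambda \in \sigma_{sf}(T)$ you must \emph{also} show that $\lambda I - T$ fails to be lower semi-Fredholm, which needs a separate construction (e.g.\ a singular sequence of approximate eigenfunctionals for $T'$ built from point masses on infinitely many distinct orbits accumulating at the orbit of $t$ --- this is precisely where the hypothesis that $\partial A$ has no isolated points does real work, not merely ``through the uniform-algebra structure''). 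The converse inclusion $\sigma_{a.p.}(T) \subseteq S_\partial$ is also subtler than you indicate: concentrating approximate eigenfunctions at points $t_\alpha$ of constant period $p$ and passing to a limit $t^*$ yields $\lambda^{p} = w_{p}(t^*)$, but if $p(t^*)$ is a proper divisor of $p$ this only gives $\lambda^{p(t^*)} = \eta\, w_{p(t^*)}(t^*)$ for some root of unity $\eta$, so an additional argument is required to land exactly in $S_\partial$. None of this casts doubt on the statement, which is proved in \cite{KO}, but as written your proposal does not close either half of the lemma.
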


Let us recall the following definition.

\begin{definition} \label{d1}
  A uniform algebra $A$ is called analytic if for any nonempty open subset $U$ of $\partial A$ and for any $a \in A$ we have $a|U \equiv 0 \Rightarrow a=0$.
\end{definition}

\begin{lemma} \label{l2} (see~\cite[Theorem 2.9]{KO})
    Let $A$ be a unital analytic uniform algebra, $U = T_\varphi$ be a non-periodic automorphism of $A$, and $T = wT_\varphi$. Assume that
\begin{enumerate} [(a)]
  \item $\partial A$ is a connected set.
  \item There are periodic automorphisms, $U_n = T_{\varphi_n}$ of $A$ such that for any $t \in \partial A$ we have $\varphi_n(t) \rightarrow \varphi(t)$.
\item Let $\mu \in (C(\partial A))^\prime$ be a $\varphi$-invariant probability measure. Then, $\mu$ is $\varphi_n$-invariant, $n \in \mathds{N}$.
\end{enumerate}
Then, 
\begin{enumerate}
  \item $\sigma_{a.p.}(T) = \sigma_{sf}(T)$.
  \item The sets $\sigma(T)$ and $\sigma_{sf}(T)$ are connected rotation invariant subsets of $\mathds{C}$.
  \item If either $w \in A^{-1}$ or $\min \limits_{s \in \partial A} |w(s)| = 0$, then $\sigma(T) = \sigma_{sf}(T)$.
\end{enumerate}
\end{lemma}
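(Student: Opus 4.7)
The plan is to approximate the non-periodic operator $T = wT_\varphi$ by the periodic weighted automorphisms $T_n = wT_{\varphi_n}$, for which Lemma~\ref{l1} supplies a complete description of $\sigma(T_n)$ and of $\sigma_{a.p.}(T_n) = \sigma_{sf}(T_n)$, and then to use assumptions~(a)--(c) to pass to the limit. Assumption~(c), together with the standard identification
\[
\rho(T) = \sup\left\{\exp \int_{\partial A} \log |w|\, d\mu : \mu \text{ is a $\varphi$-invariant probability on } \partial A\right\},
\]
which is valid in every analytic uniform algebra, forces $\rho(T)=\rho(T_n)$, and when $w\in A^{-1}$ also $\rho_{\min}(T)=\rho_{\min}(T_n)$, for every $n$. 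Assumption~(b) will let us transfer the explicit descriptions of $\sigma_{sf}(T_n)$ to the limit, while assumption~(a) (connectedness of $\partial A$) is the source of connectedness of the resulting spectra.

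For conclusion~(1) the non-trivial inclusion is $\sigma_{a.p.}(T)\subseteq\sigma_{sf}(T)$. Given $\lambda\in\sigma_{a.p.}(T)$, I would first show that $\lambda$ is at distance $o(1)$ from $\sigma_{sf}(T_n)=\{\mu:\exists\,t\in\partial A,\ \mu^{p_n(t)}=w_{p_n(t)}(t)\}$, so that one can pick $t_n\in\partial A$ with $\lambda^{p_n(t_n)}\approx w_{p_n(t_n)}(t_n)$. Because $\varphi$ is not periodic but $\varphi_n\to\varphi$ pointwise, the periods $p_n(t_n)$ must tend to infinity, and the corresponding $\varphi_n$-orbits become arbitrarily long and quasi-non-recurrent. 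Using the analyticity of $A$ together with the connectedness of $\partial A$, one manufactures on each such long orbit a unit vector $f_n\in A$ that approximately diagonalizes $T$ with eigenvalue $\lambda$ and is essentially disjoint from $f_m$ for $m\neq n$; the resulting sequence is singular and witnesses $\lambda\in\sigma_{usf}(T)$. The analogous construction on the dual side, exploiting $\sigma_{lsf}(T)=\sigma_{usf}(T^\prime)$ and building singular sequences of measures carried on the same orbits, yields $\lambda\in\sigma_{lsf}(T)$, and hence $\lambda\in\sigma_{sf}(T)$.

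For conclusion~(2) the rotation invariance of each $\sigma_{sf}(T_n)$ is immediate from the formula $\mu^{p_n(t)}=w_{p_n(t)}(t)$; since $p_n(t)\to\infty$ along orbits producing points of the limit spectrum, the discrete rotational symmetries coalesce into full $\mathds{T}$-invariance of $\sigma_{sf}(T)$ and of $\sigma(T)$. Connectedness of $\sigma_{sf}(T)$ follows from the fact that a compact rotation-invariant subset of $\mathds{C}$ is a union of concentric circles, while the continuous function $t\mapsto |w_{p_n(t)}(t)|^{1/p_n(t)}$ on the connected set $\partial A$ fills in the radii between its minimum and its maximum; an intermediate-value argument in the limit identifies $\sigma_{sf}(T)$ with the closed annulus $\{\rho_{\min}(T)\le|\lambda|\le\rho(T)\}$, or the closed disc of radius $\rho(T)$ when $\rho_{\min}(T)=0$. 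The same features pass to $\sigma(T)$ because the bounded components of $\mathds{C}\setminus\sigma_{sf}(T)$ are either entirely inside or entirely outside $\sigma(T)$, and in the annular/disc geometry above there are none.

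Conclusion~(3) then reduces to a short deduction: if $w\in A^{-1}$ then $T$ is invertible, $0\notin\sigma(T)$, and both $\sigma(T)$ and $\sigma_{sf}(T)$ reduce to the annulus $\{\rho_{\min}(T)\le|\lambda|\le\rho(T)\}$; if $\min_{\partial A}|w|=0$, then $0\in\sigma_{a.p.}(T)=\sigma_{sf}(T)$, and the connected rotation-invariant set $\sigma(T)$ is forced to fill the whole closed disc of radius $\rho(T)$, again equal to $\sigma_{sf}(T)$. The principal obstacle lies in conclusion~(1): producing genuinely singular, mutually almost-disjoint approximate eigenvectors inside an analytic uniform algebra — where bump functions are unavailable and one must build peaking functions adapted to long, nearly non-recurrent periodic orbits of the $\varphi_n$ — and simultaneously controlling the geometric mean $|w_{p_n(t_n)}(t_n)|^{1/p_n(t_n)}$ along those orbits. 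This is precisely where assumption~(c) enters decisively, through the ergodic interpretation of $\rho(T)$ recorded above.
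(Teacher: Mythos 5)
First, note that the paper does not prove Lemma~\ref{l2} at all: it is imported verbatim from \cite[Theorem 2.9]{KO}, so there is no internal proof to compare with. Judged on its own terms, your approximation scheme has genuine gaps. The first is in conclusion (1). Hypothesis (b) gives only pointwise convergence $\varphi_n(t)\to\varphi(t)$ on $\partial A$; this yields at best strong operator convergence of $wT_{\varphi_n}$ to $wT_\varphi$, and neither $\sigma_{a.p.}$ nor $\sigma_{sf}$ is semicontinuous in the way you need under strong convergence. So the opening step, that $\lambda\in\sigma_{a.p.}(T)$ lies at distance $o(1)$ from $\sigma_{sf}(T_n)$, is unjustified. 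Moreover the part you defer with ``one manufactures \dots a unit vector $f_n\in A$'' --- building singular, mutually almost disjoint approximate eigenvectors inside an analytic uniform algebra, where bump functions are unavailable --- is not a detail but the entire content of the cited theorem; naming it ``the principal obstacle'' does not discharge it.

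The second gap is in conclusion (2), where your intermediate-value argument ``identifies $\sigma_{sf}(T)$ with the closed annulus $\{\rho_{\min}(T)\le|\lambda|\le\rho(T)\}$.'' That identification is false, and is contradicted by this very paper: when $w$ is invertible in $C(\partial A)$ but not in $A$, the operator $T$ is not invertible, so $\rho_{\min}(T)=0$ and your formula would give $\sigma_{sf}(T)=\rho(T)\mathds{D}$; but case (2) of Proposition~\ref{p1} and case (3) of Proposition~\ref{p4} show that $\sigma_{a.p.}(T)=\sigma_{sf}(T)$ is then a proper annulus with positive inner radius $\rho_{\min}(T,C(\partial A))$, strictly smaller than $\sigma(T)=\rho(T)\mathds{D}$. (This is also why conclusion (3) carries hypotheses at all.) A smaller but real error of the same kind: hypothesis (c) says every $\varphi$-invariant measure is $\varphi_n$-invariant, not conversely, so the variational formula of Lemma~\ref{l5} gives only $\rho(T_n)\ge\rho(T)$, not the equality $\rho(T_n)=\rho(T)$ on which your limit passage relies. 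Conclusion (3), which you derive from (1) and (2), inherits these defects.
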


\begin{lemma} \label{l3} (see~\cite[Lemma 3.6, Theorem4.2, Corollary 4.4]{Ki} and~\cite[Theorem 2.1, Proposition 2.11]{KO})

  Let $A$ be an analytic uniform unital algebra. Let $T_\varphi$ be a non-periodic automorphism of $A$, $w \in A$, and $T = w T_\varphi$. Then,
\begin{enumerate}
  \item $\sigma(T)$ is a connected rotation invariant subset of $\mathds{C}$.
  \item $\sigma_{a.p.}(T)$ is rotation invariant and $\sigma_{a.p.}(T) = \sigma_{a.p.}(T, C(\partial A))$.
  \item Let $\lambda \in \mathds{C}$, then $\lambda \in \sigma_{a.p.}(T)$ if and only if there is a $t \in \partial A$ such that $|w_n(t)| \geq |\lambda|^n$ and $|w_n(\varphi^{-n}(t)) \leq |\lambda|^n$ for any $n \in \mathds{N}$.
  \item If $\partial A$ has no isolated points, then $\sigma_{usf}(T) = \sigma_{a.p.}(T)$.
  \item If $\partial A$ has no isolated points and there is a $u \in \partial A$ such that $|w_n(u)| \leq |\lambda|^n$ and $|w_n(\varphi^{-n}(u)) \geq |\lambda|^n$ for any $n \in \mathds{N}$, then $\lambda \in \sigma_{lsf}(T)$.
\end{enumerate}
\end{lemma}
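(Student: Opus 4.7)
The plan is to outline how the five clauses follow from the strategy developed in~\cite{Ki} and~\cite{KO}, rather than to rework every detail. The overarching principle is that for an analytic uniform algebra the Shilov boundary $\partial A$ carries all the approximate-point-spectral information, while the maximum-modulus principle together with the rigidity coming from analyticity let one push and pull conclusions between $A$ and $C(\partial A)$.

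For (1), I would expand $(\lambda I - T)^{-1}$ for $|\lambda|$ large via the Neumann series and use $T^n f = w_n (f\circ\varphi^n)$. Each coefficient in the resulting formal Fourier expansion along the $\varphi$-action is analytic in $\lambda$ on the resolvent set; analyticity of $A$ prevents such a coefficient from being nonzero on one annular component of the resolvent set while vanishing on another. The spectrum is thus forced to be rotation invariant and to be either a closed disk or an annulus, in particular connected. For (2), the inclusion $\sigma_{a.p.}(T) \subseteq \sigma_{a.p.}(T,C(\partial A))$ is immediate from $\|f\|_A = \|f|_{\partial A}\|_{C(\partial A)}$, and the reverse inclusion is obtained by converting approximate eigenvectors in $C(\partial A)$ into approximate eigenvectors in $A$ using analytic peak functions. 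Rotation invariance of $\sigma_{a.p.}(T)$ then follows from conjugating by unimodular multipliers $u_k \in C(\partial A)$ with $(u_k\circ\varphi)/u_k \to \alpha$, whose existence rests on a standard coboundary construction available because $\varphi$ is non-periodic on $\partial A$.

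Part (3) is the core. For necessity, take peaking points $s_m \in \partial A$ for an approximate eigenvector $g_m$ in $C(\partial A)$; from $|T^n g_m(s_m)| = |w_n(s_m)|\,|g_m(\varphi^n s_m)| \approx |\lambda|^n$ one extracts, via a diagonal argument and compactness of $\partial A$, a cluster point $t$ with $|w_n(t)|\geq|\lambda|^n$ for every $n$, and the backward estimate at $\varphi^{-n}(t)$ follows symmetrically. For sufficiency, given such a $t$, build analytic peak functions supported in shrinking neighborhoods of the forward orbit of $t$; the two inequalities precisely balance the growth of $w_n$ against the tails of the peaks, so the resulting family is an approximate eigenvector. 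Part (4) then follows because the absence of isolated points in $\partial A$ makes any peak-based approximate eigensequence automatically singular: a norm-convergent subsequence would produce a genuine eigenvector concentrated at a single point, contradicting the fact that the peak point has no isolated neighborhood. Part (5) is the dual statement, obtained by applying (3) to $T'$ and invoking $\sigma_{usf}(T') = \sigma_{lsf}(T)$: the hypothesized inequalities at $u$ are exactly the condition of clause (3) read for $T'$, whose weight and dynamics are inverted.

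The principal technical hurdle is the peak-function construction in the sufficiency of (3): in a general analytic uniform algebra one has no coordinate functions, so the peak must be built along the $\varphi$-orbit using only the abstract analytic structure (Gleason parts and representing measures on $\partial A$). This construction, carried out in~\cite[Theorem 4.2]{Ki}, is what makes the characterization work and is the piece one would need to extract before the remaining clauses essentially package themselves.
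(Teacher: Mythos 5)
The paper offers no proof of this lemma at all: it is imported verbatim from \cite{Ki} and \cite{KO} (the parenthetical citation is the ``proof''), so your sketch can only be measured against the arguments in those references. Most of your outline is consistent with them --- the restriction argument for $\sigma_{a.p.}(T)\subseteq\sigma_{a.p.}(T,C(\partial A))$, the two-sided growth condition in (3) proved by peak-function constructions along the orbit, and the use of absence of isolated points to force singularity of the approximate eigensequences in (4) (though the mechanism there is that one \emph{builds} the peak functions with shrinking, essentially disjoint supports so that no subsequence can converge, not that a norm limit would yield a contradiction; an eigenvector by itself contradicts nothing).

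The genuine gap is in your treatment of (5). You propose to ``apply (3) to $T'$ and invoke $\sigma_{usf}(T')=\sigma_{lsf}(T)$,'' but clause (3) is a statement about weighted automorphisms of analytic uniform algebras, and $T'$ acts on $A'$ (or on $C(\partial A)'$, a space of measures), which is not a uniform algebra; the hypotheses of (3) simply do not apply to $T'$. The correct route, which is what the cited results actually do, is to use the identity $\sigma_{lsf}(T)=\sigma_{usf}(T')$ and then construct a \emph{singular sequence of functionals} directly --- measures concentrated along the backward orbit of $u$, whose images under $T'$ are controlled by exactly the two inequalities $|w_n(u)|\leq|\lambda|^n$ and $|w_n(\varphi^{-n}(u))|\geq|\lambda|^n$. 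This asymmetry is also why (5) is stated only as a sufficient condition while (3) is an equivalence: if the duality were as clean as you assert, (5) would be an ``if and only if,'' and it is not claimed to be. As stated, your argument for (5) does not go through without this replacement.
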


\begin{lemma} \label{l4} (see~\cite{Ki1}) Let $A$ be a unital uniform algebra and $T = wT_\varphi$ be a weighted automorphism of $A$. Assume that $\partial A$ contains no isolated points. Assume also that $\partial A$ is the union of $\varphi$ and $\varphi^{-1}$ invariant nonempty sets $K_1, K_2$ and $O$ such that the sets $K_1$ and $K_2$ are closed, $(K_1 \cup K_2) \cap O = \emptyset$,  and for any $k \in O$ we have
\begin{equation*}
  \begin{split}
 & \bigcap \limits_{m=1}^\infty  cl\{\varphi^i(k) : i \geq m\} = K_1 , \\
& \bigcap \limits_{m=1}^\infty cl\{\varphi^{-i}(k) : i \geq m\} = K_2.
\end{split}
\end{equation*}
Then, the following statements hold.
\begin{enumerate}
  \item If $0 \leq r = \rho(T, C(K_2)) < R = \rho_{min}(T, C(K_1))$, then either $\mathds{AN}(r,R) \subseteq \sigma_{usf}(T)$, or, if $r = 0$, $R\mathds{D} \subseteq \sigma_{usf}(T)$.
  \item If $0 \leq r = \rho(T, C(K_1)) < R = \rho_{min}(T, C(K_2))$, then either $\mathds{AN}(r,R) \subseteq \sigma_{lsf}(T)$, or, if $r = 0$, $R\mathds{D} \subseteq \sigma_{lsf}(T)$.
  \item If $\rho_{min}(T, C(K_1)) = \rho(T, C(K_1)) = \rho_{min}(T, C(K_2)) = \rho(T, C(K_2)) = r$, then
$r\mathds{T} \subseteq \sigma_{sf}(T)$.
\item Moreover, if there is a $k \in O$ such that $w(k) =0$ and the set $\{n \in \mathds{Z} : w(\varphi^n(k)) =0\}$ is finite, then $\rho(T, C(K_2))\mathds{D} \subseteq \sigma_{usf}(T)$ and 
$\rho(T, C(K_1))\mathds{D} \subseteq \sigma_{lsf}(T)$.
\end{enumerate}
 \end{lemma}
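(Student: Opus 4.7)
The plan is to reduce the statement to the construction of singular sequences in $C(\partial A)$ via the weighted shift structure along orbits in $O$, exploiting the heteroclinic dynamics that forward orbits accumulate on $K_1$ and backward orbits on $K_2$. Because $\partial A$ has no isolated points, standard reductions (cf.~\cite{Ki,KO}) let me identify the semi-Fredholm spectra of $T$ on $A$ with those of $T$ acting as a weighted composition operator on $C(\partial A)$, so I can work entirely with continuous functions on $\partial A$.

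For part (1), fix $\lambda$ with $r<|\lambda|<R$ and a point $k\in O$. I build an approximate eigenfunction of the form
\[
x_N \;=\; \sum_{j=0}^{N} a_j\,\chi_j,
\]
where $\chi_j$ is a continuous bump function (available since $\partial A$ has no isolated points) supported in a small neighborhood of $\varphi^{-j}(k)$, with the neighborhoods chosen pairwise disjoint and shrinking along the orbit, and the coefficients $a_j := w_j(\varphi^{-j}(k))/\lambda^j$ are dictated by the recursion that forces $(Tx_N-\lambda x_N)(\varphi^{-j}(k))=0$ on the orbit points. Since the backward orbit $\varphi^{-j}(k)$ accumulates on $K_2$, where $\rho(T,C(K_2))=r<|\lambda|$, one obtains $|a_j|\lesssim(r'/|\lambda|)^j$ for some $r'\in(r,|\lambda|)$, so the series is norm-bounded and can be normalized. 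The residual error $Tx_N-\lambda x_N$ is concentrated near $\varphi^{-N}(k)$ and near the boundaries of the bump supports, both of which can be made small by shrinking the bumps. To upgrade $\lambda\in\sigma_{a.p.}(T)$ to $\lambda\in\sigma_{usf}(T)$, I note that the forward orbit of $k$ accumulates on $K_1$ and use a diagonal construction moving the "centre" of the support along the forward orbit, so that the supports of the resulting sequence migrate into disjoint regions of $\partial A$, preventing any norm-convergent subsequence. The cases $|\lambda|=r$ and $|\lambda|=R$ follow by closedness of $\sigma_{usf}$; the case $r=0$ drops the lower-tail constraint and yields the full disk.

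Part (2) is obtained by the dual construction: I produce singular sequences $x_n'\in C(\partial A)'$ supported on forward orbit tails of points in $O$ (which accumulate on $K_1$, where by assumption $\rho(T,C(K_1))=r$ is small) with the growth governed by $K_2$, and then invoke $\sigma_{lsf}(T)=\sigma_{usf}(T')$. Part (3) follows by combining parts (1) and (2) in the limiting regime: when all four spectral radii coincide at $r$, each circle point $|\lambda|=r$ is simultaneously a limit of points produced by both the upper and lower constructions, hence lies in $\sigma_{sf}(T)=\sigma_{usf}(T)\cap\sigma_{lsf}(T)$. Part (4), with $w(k)=0$ and $\{n:w(\varphi^n(k))=0\}$ finite, removes the obstruction at $\lambda=0$: past the last zero, the orbit weights satisfy the same asymptotics as before, and the presence of a zero of $w$ somewhere on the orbit lets the approximate eigenfunction be truncated cleanly on one side without incurring a boundary error, upgrading the annulus inclusions to full disk inclusions on each of $\sigma_{usf}$ and $\sigma_{lsf}$.

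The main obstacle I expect is the careful topological bookkeeping for singularity: the bumps $\chi_j$ must be real elements of $C(\partial A)$ with genuinely disjoint supports, and the sequence $x_N$ has to be moved along the orbit in a way that prevents accumulation in norm while still keeping $\|Tx_N-\lambda x_N\|\to 0$. The dynamical input — that orbits in $O$ run between $K_1$ and $K_2$ without collapsing and that $\partial A$ has no isolated points — is used precisely to arrange these supports, and this is the technical heart of the argument in~\cite{Ki1} that needs to be transcribed to the uniform-algebra setting.
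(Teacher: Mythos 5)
The paper itself contains no proof of Lemma~\ref{l4}; it is quoted from~\cite{Ki1}, so your sketch can only be measured against the orbit-based singular-sequence technique of that reference, whose general spirit you do capture. Nevertheless the construction you describe for part (1) has a concrete gap. With bumps $\chi_j$ centred at $\varphi^{-j}(k)$ and $(Tf)(t)=w(t)f(\varphi(t))$, the function $T\chi_j$ is supported near $\varphi^{-j-1}(k)$; hence in $Tx_N-\lambda x_N$ the terms at $\varphi^{-j}(k)$ for $1\le j\le N$ cancel by your recursion, and the term at $\varphi^{-N-1}(k)$ is small because $|a_N|\lesssim (r'/|\lambda|)^N$, but the term $-\lambda a_0\chi_0$ at the head of the orbit is uncancelled and has norm $|\lambda|\,|a_0|=|\lambda|$, which is comparable to $\|x_N\|_\infty=\max_j|a_j|=|a_0|$. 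So $\|Tx_N-\lambda x_N\|$ does not tend to $0$. Symptomatically, your argument never uses the hypothesis $|\lambda|<R=\rho_{min}(T,C(K_1))$, and without that hypothesis the conclusion is false (your construction would place every $\lambda$ with $|\lambda|>r$ in $\sigma_{a.p.}(T)$). The repair is the standard two-sided construction: take bumps along $\varphi^m(k)$ for $-N\le m\le M$ with $a_{m+1}=\lambda a_m/w(\varphi^m(k))$; the coefficients then decay as $m\to+\infty$ because the forward orbit accumulates on $K_1$ where $\rho_{min}>|\lambda|$, and decay as $m\to-\infty$ because the backward orbit accumulates on $K_2$ where $\rho<|\lambda|$, so both end errors are small relative to the norm. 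Translating this block along the orbit then yields singularity.

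Two further points. First, your opening reduction, identifying the semi-Fredholm spectra of $T$ on $A$ with those of $T$ on $C(\partial A)$, is not a standard reduction and is false in general for $\sigma_{lsf}$: the paper's own results (e.g.\ Propositions~\ref{p1}(2) and~\ref{p4}(3)) exhibit $\sigma_{lsf}(T,\mathds{A}^2)$ as a full disc while the corresponding spectrum on $C(\mathds{T}^2)$ is an annulus or circle. Moreover your bumps live in $C(\partial A)$, not in $A$; producing singular approximate eigenvectors inside the uniform algebra requires peak-function localization (as in~\cite{Ki,KO}), and the lower semi-Fredholm statement must be argued on $A'$ directly rather than transported from $C(\partial A)'$. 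Second, part (3) cannot be obtained ``by combining parts (1) and (2) in the limiting regime'': when all four radii equal $r$ the annuli of parts (1) and (2) are empty, so there are no points whose limits could populate $r\mathds{T}$; the inclusion $r\mathds{T}\subseteq\sigma_{sf}(T)$ needs its own bilateral-shift argument in which the coefficient sequence along the full orbit is bounded above and below, giving singular sequences simultaneously for $T$ and for $T'$.
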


\begin{lemma} \label{l5} (see~\cite{Ki}) Let $A$ be a unital uniform algebra and $T = wT_\varphi$ be a weighted automorphism of $A$. Then,
\begin{equation*}
  \rho(T) = \max \limits_{\mu \in M_\varphi} \exp \int \ln{|w|} d\mu,
\end{equation*}
  where $M_\varphi = \{\mu \in C^\prime(\partial A) : T_\varphi^\prime \mu = \mu, \mu \geq 0, \|\mu\| = 1$\}.
\end{lemma}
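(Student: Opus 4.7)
The plan is to prove the variational formula by combining the spectral radius formula with subadditivity and a Krylov--Bogolyubov construction of an optimal invariant measure.

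\medskip

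\noindent\textbf{Step 1 (reduction to $\|w_n\|_\infty$).} Because $A$ is a uniform algebra, $T_\varphi$ is an isometry, and for $f \in A$ one has $\|wT_\varphi f\|_A = \sup_{t \in \partial A}|w(t)||f(\varphi(t))|$; taking $f \equiv 1$ shows $\|T\| = \|w\|_{\partial A}$, and iterating gives $\|T^n\| = \|w_n\|_{\partial A}$. Hence
\begin{equation*}
\rho(T) \;=\; \lim_{n\to\infty} \|w_n\|_{\partial A}^{1/n},
\end{equation*}
the limit existing by Fekete's lemma applied to the subadditive sequence $a_n := \log\|w_n\|_{\partial A}$ (subadditivity follows from $w_{n+m} = w_n\,(w_m\circ\varphi^n)$).

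\medskip

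\noindent\textbf{Step 2 (upper bound $\exp\int\ln|w|\,d\mu \leq \rho(T)$).} Fix $\mu \in M_\varphi$. Invariance of $\mu$ and the cocycle identity give
\begin{equation*}
  n\int \ln|w|\,d\mu \;=\; \int \ln|w_n|\,d\mu \;\leq\; \ln\|w_n\|_{\partial A},
\end{equation*}
so dividing by $n$ and letting $n\to\infty$ yields $\int\ln|w|\,d\mu \leq \ln\rho(T)$ (valid even when $\int\ln|w|\,d\mu = -\infty$).

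\medskip

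\noindent\textbf{Step 3 (existence of an extremal measure).} Choose $t_n \in \partial A$ with $|w_n(t_n)| = \|w_n\|_{\partial A}$ and form the empirical measures
\begin{equation*}
  \mu_n \;=\; \frac{1}{n}\sum_{k=0}^{n-1}\delta_{\varphi^k(t_n)} \in C(\partial A)^\prime.
\end{equation*}
By weak-$*$ compactness of the probability measures on the compact space $\partial A$, pass to a subnet $\mu_n \to \mu$. A standard Krylov--Bogolyubov computation $\|T_\varphi^\prime\mu_n - \mu_n\| \leq 2/n$ shows that $\mu \in M_\varphi$.

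\medskip

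\noindent\textbf{Step 4 (lower bound via a regularisation of $\ln|w|$).} This is the delicate step, since $\ln|w|$ is only upper semicontinuous where $w$ may vanish, and weak-$*$ convergence does not transfer directly. For $\varepsilon>0$ set $f_\varepsilon := \ln\max(|w|,\varepsilon) \in C(\partial A)$; then $f_\varepsilon \downarrow \ln|w|$ as $\varepsilon \downarrow 0$ and $f_\varepsilon \geq \ln|w|$. Compute
\begin{equation*}
  \int f_\varepsilon\,d\mu_n \;\geq\; \int\ln|w|\,d\mu_n \;=\; \frac{1}{n}\ln|w_n(t_n)| \;\xrightarrow[n\to\infty]{}\; \ln\rho(T).
\end{equation*}
Weak-$*$ convergence gives $\int f_\varepsilon\,d\mu_n \to \int f_\varepsilon\,d\mu$, hence $\int f_\varepsilon\,d\mu \geq \ln\rho(T)$ for every $\varepsilon>0$. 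Since $f_\varepsilon$ is bounded above by the constant $\ln\|w\|_{\partial A}$, the monotone convergence theorem for decreasing sequences yields $\int\ln|w|\,d\mu \geq \ln\rho(T)$, so $\exp\int\ln|w|\,d\mu \geq \rho(T)$.

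\medskip

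Combining Steps 2 and 4 shows that the supremum is attained at this $\mu$, which justifies writing $\max$ rather than $\sup$ and completes the proof. The main obstacle is the zero set of $w$ in Step 4: without the $\varepsilon$-regularisation the argument would fail because $\ln|w|$ is not continuous, and one must verify that replacing $\ln|w|$ by the continuous minorant $f_\varepsilon$ does not lose information in the limit.
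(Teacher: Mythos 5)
Your argument is correct. Note, however, that the paper does not actually prove Lemma~\ref{l5}: it is quoted from~\cite{Ki}, where the spectral radius formula for weighted automorphisms of uniform algebras is established. Your proof is the standard self-contained route to this variational principle and matches the structure of the known arguments: the identity $\|T^n\|=\|w_n\|_{\partial A}$ (which uses that the homeomorphism induced by an automorphism maps $\partial A$ onto itself --- worth a sentence, though not a gap), the upper bound $\int\ln|w|\,d\mu\le\tfrac1n\ln\|w_n\|_{\partial A}$ from invariance and the cocycle identity, and the lower bound via Krylov--Bogolyubov measures supported on maximizing orbits. Your Step 4 correctly isolates the only delicate point, the upper semicontinuity of $\mu\mapsto\int\ln|w|\,d\mu$, and the $\varepsilon$-truncation $f_\varepsilon=\ln\max(|w|,\varepsilon)$ handles it properly, including the degenerate cases $\rho(T)=0$ and $\int\ln|w|\,d\mu=-\infty$. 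The one stylistic difference from the cited source is how attainment of the maximum is obtained: there one typically observes that $\mu\mapsto\int\ln|w|\,d\mu$ is weak-$*$ upper semicontinuous (as an infimum of the continuous functionals $\mu\mapsto\int f_\varepsilon\,d\mu$) on the weak-$*$ compact convex set $M_\varphi$, so the supremum is attained; you instead exhibit a maximizing measure explicitly as a limit of empirical measures. Both are valid, and your construction has the small advantage of producing the extremal measure concretely.
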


\section{Essential spectra of weighted automorphisms of the algebra $\mathds{A}^2$}

It is well known (see~\cite{Ru}) that every automorphism of the algebra $\mathds{A}^2$ is the composition operator $T_\Phi$, where the map $\Phi$ is on of the following
\begin{enumerate} [(a)]
  \item $\Phi(z_1, z_2) = (\varphi(z_1), \psi(z_2)), (z_1, z_2) \in \mathds{D}^2$,
  \item $\Phi(z_1, z_2) = (\psi(z_2), \varphi(z_1)), (z_1, z_2) \in \mathds{D}^2$,
\end{enumerate}

\noindent where $\varphi$ and $\psi$ are Möbius transformations of $\mathds{D}$.

In the case (b) the map $\Phi^2$ is of the form (a) and taking into consideration the spectral mapping theorem for essential spectra (see~\cite[Corollary 3.61, p. 149]{Ai}) we can without loss of generality assume that the map $\Phi$ is of the form (a).

We also can and will without loss of generality consider only elliptic Möbius transformations of the form $\varphi (z) = \alpha z, z \in \mathds{D}, \alpha \in \mathds{T}$.

\begin{proposition} \label{p12}
 Let $\varphi(z_1, z_2) = (\alpha_1 z_1, \alpha_2 z_2)$, where $\alpha_1$ is a primitive $p^{th}$-root of unity and $\alpha_2$ is a primitive $q^{th}$-root of unity, $p,q \in \mathds{N}$. Let $m = l.c.m.(p,q)$. Then, 
\begin{enumerate}
  \item $\sigma(T) = \sigma_{lsf}(T) = \{\lambda \in \mathds{C}: \exists t \in \mathfrak{M}_A, \lambda^m = w_m(t)\}$.
  \item $\sigma_{a.p.}(T) = \sigma_{sf}(t) =\{\lambda \in \mathds{C}: \exists t \in \partial A, \lambda^m = w_m(t)\}$
\end{enumerate}
\end{proposition}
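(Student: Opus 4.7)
The plan is to apply Lemma~\ref{l1} in this periodic setting and then strengthen the conclusion to obtain $\sigma_{lsf}(T)=\sigma(T)$. Since $\alpha_1^m=\alpha_2^m=1$, the automorphism $T_\varphi$ is periodic with $T_\varphi^m=I$, and $\partial \mathds{A}^2=\mathds{T}^2$ is a compact set without isolated points, so Lemma~\ref{l1} applies and gives
\begin{equation*}
\sigma(T)=\bigl\{\lambda\in\mathds{C}:\exists\,t\in\mathds{D}^2,\;\lambda^{p(t)}=w_{p(t)}(t)\bigr\},
\end{equation*}
together with the analogous formula over $t\in\mathds{T}^2$ for $\sigma_{a.p.}(T)=\sigma_{sf}(T)$, where $p(t)$ denotes the least positive period of $t$ under $\varphi$.

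The next step is to rewrite these from the $p(t)$-form into the uniform $m$-form of the proposition. Because $\alpha_1,\alpha_2$ are primitive of orders $p$ and $q$, a point $t=(z_1,z_2)\in \mathds{D}^2$ has $p(t)=m$ precisely when $z_1,z_2\neq 0$. Every point of $\partial A=\mathds{T}^2$ satisfies this, so~(2) follows immediately. For the full spectrum, the orbit-periodicity identity $w_{p(t)}(t)^{m/p(t)}=w_m(t)$ shows that each $\lambda$ in Lemma~\ref{l1}'s set also satisfies $\lambda^m=w_m(t)$. Conversely, $(\mathds{D}\setminus\{0\})^2$ is dense in $\mathfrak{M}_A$ and consists exactly of the points with $p(t)=m$; combining density with the continuity of $w_m$, the continuity of the $m$-th root near any $\lambda\neq 0$ (the case $\lambda=0$ is immediate), and the closedness of $\sigma(T)$ yields the reverse inclusion and establishes the $m$-formula in~(1).

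The substantive new step is the equality $\sigma(T)=\sigma_{lsf}(T)$. From $T_\varphi^m=I$ one obtains $T^m=w_m T_\varphi^m=M_{w_m}$. Fix $\lambda\in\sigma(T)$; then $f:=w_m-\lambda^m\in\mathds{A}^2$ is $\varphi$-invariant and vanishes somewhere in $\mathds{D}^2$. Assuming $f\not\equiv 0$, its zero set $Z$ is a $\varphi$-invariant complex analytic hypersurface, hence uncountable; one shows that $Z$ meets $(\mathds{D}\setminus\{0\})^2$ in uncountably many full-length orbits (the apparent pathological case $Z\subseteq$ coordinate axes is excluded by the $\varphi$-equivariance constraints on any factorization of $f$ by $z_1$ or $z_2$). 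For each such orbit through a point $p$, and for $\lambda\neq 0$, the functional
\begin{equation*}
\mu_p(h):=\sum_{k=0}^{m-1}\lambda^{m-1-k}\,w_k(p)\,h(\varphi^k p),\qquad h\in\mathds{A}^2,
\end{equation*}
annihilates the range of $\lambda I-T$: a direct telescoping computation using the cocycle $w_{k+1}(p)=w_k(p)\,w(\varphi^k p)$ and the closing identity $\lambda^m=w_m(p)$ confirms it. Distinct orbits yield $\mu_p$ with disjoint supports, hence linearly independent, so $\mathrm{range}(\lambda I-T)$ has infinite codimension in $\mathds{A}^2$. The residual cases $\lambda=0$ (treated via point evaluations at zeros of $w$, which form an uncountable analytic subvariety of $\mathds{D}^2$) and $f\equiv 0$ (so $T^m=\lambda^m I$, making $T$ algebraic with finitely many eigenvalues and therefore infinite-dimensional eigenspaces) are handled separately.

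The main obstacle I anticipate is the geometric claim that the zero set of the $\varphi$-invariant function $w_m-\lambda^m$ contains uncountably many full-period orbits in $\mathds{D}^2$, ruling out configurations concentrated entirely on the coordinate axes. Once this is secured, the orbit-functional construction forces infinite codimension of $\mathrm{range}(\lambda I-T)$ and delivers $\sigma_{lsf}(T)=\sigma(T)$, completing the proposition.
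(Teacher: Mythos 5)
Your skeleton matches the paper's (one-sentence) proof: Lemma~\ref{l1} for the two spectral formulas, passage from the $p(t)$-form to the uniform $m$-form by density of the full-period points and closedness of $\sigma(T)$, and infinitely many functionals supported on the zero set of $w_m-\lambda^m$ to force $\operatorname{codim}R(\lambda I-T)=\infty$; your telescoping verification of $\mu_p$ is correct. The gap is exactly at the point you flag as the main obstacle, and it is fatal as stated: it is \emph{not} true that the zero set $Z$ of $w_m-\lambda^m$ must meet $(\mathds{D}\setminus\{0\})^2$. Take $p=2$, $q=1$, so $\varphi(z_1,z_2)=(-z_1,z_2)$ and $m=2$, and let $w(z_1,z_2)=2+z_1$ (invertible in $\mathds{A}^2$). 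Then $w_2=4-z_1^2$ is $\varphi$-invariant, and for $\lambda=\pm 2$ the zero set of $w_2-\lambda^2=-z_1^2$ is exactly $\{z_1=0\}$: an uncountable $\varphi$-invariant hypersurface consisting entirely of fixed points of $\varphi$, with no full-length orbits at all. So the ``pathological case'' you claim is excluded by equivariance of a factorization genuinely occurs.

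In that configuration your construction breaks down for half of the relevant $\lambda$. On an orbit of period $d<m$ the coefficient of $h(\varphi^j p)$ in $\mu_p$ equals $\lambda^{m-1-j}w_j(p)\sum_{i=0}^{m/d-1}\zeta^i$ with $\zeta=w_d(p)/\lambda^d$ an $(m/d)$-th root of unity; for $\lambda=2$ one has $\zeta=1$ and $\mu_p$ degenerates to a harmless multiple of $h\mapsto h(0,z_2)$, but for $\lambda=-2$ one has $\zeta=-1$ and $\mu_p\equiv 0$. Yet $-2\in\sigma(T)=\sigma_{lsf}(T)$: solving $(T+2I)f=1$ forces $f(z_1,z_2)=-1/z_1$, and $R(T+2I)$ is annihilated by the infinitely many independent functionals $g\mapsto 4\,\partial_{z_1}g(0,z_2)-g(0,z_2)$, $z_2\in\mathds{D}$. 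Thus when $Z$ lies on an invariant axis with multiplicity, one needs higher-order (derivative-type) functionals reflecting that multiplicity, which your argument does not produce. A smaller issue of the same flavour: in the sub-case $w_m\equiv\lambda^m$, ``finitely many eigenvalues, therefore infinite-dimensional eigenspaces'' only guarantees that \emph{some} eigenspace is infinite-dimensional; you need $\ker(\lambda I-T)$ itself to be infinite-dimensional, which does hold because that kernel is a module over the infinite-dimensional algebra of $\varphi$-invariant elements of $\mathds{A}^2$, but that step is missing.
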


\begin{proof}
  The proof follows from Lemma~\ref{l1} and the fact that the set of zeros of $w_m - \lambda^m$ in $\mathds{U}^2$ is either empty or infinite.
\end{proof}

\begin{proposition} \label{p1}
  Let $\varphi(z_1, z_2) = (\alpha_1 z_1, \alpha_2 z_2)$, where $\alpha_1, \alpha_2 \in \mathds{T}$, $\alpha_1$ is a primitive $p^{th}$-root of unity and $\alpha_2$ is not a root of unity. Let $w \in \mathds{A}^2$ and let
  $(Tf)(z_1, z_2) = w(z_1, z_2)f(\varphi(z_1, z_2)), (z_1, z_2) \in \mathds{D}^2, f \in \mathds{A}^2$. We assume that $w(z_1, z_2) = z_1^s z_2^t \tilde{w}(z_1, z_2)$, where $s,t \geq 0$ and $|\tilde{w}(0,0)| >0$
  
    \begin{enumerate}
    \item If $w$ is either an invertible element of $\mathds{A}^2$ or $w$ is not invertible in $C(\mathds{T}^2)$, then 
    
    \begin{equation} \label{eq21}
      \sigma(T) = \sigma_{sf}(T) = \{ \lambda \in \mathds{C} : \exists z_1 \in \mathds{T}, |\lambda|^p = \tilde{w}_p|(z_1,0)|\}.
    \end{equation}
    \item If $w$ is invertible in $C(\mathds{T}^2)$ but not invertible in $\mathds{A}^2$, then
    
    \begin{equation}\label{eq2}
            \begin{split}
        & \sigma_{a.p.}(T) = \sigma_{sf}(T) = \{ \lambda \in \mathds{C} : \exists z_1 \in \mathds{T}, |\lambda| = |\tilde{w}_p(z_1,0)|\}, \\
        & \sigma(T) = \sigma_{lsf}(T) = \rho(T)\mathds{D} = \max \limits_{z_1 \in \mathds{T}} |\tilde{w}_p(z_1,0)|^{1/p}\mathds{D}. \\
      \end{split}
    \end{equation}
  \end{enumerate}
\end{proposition}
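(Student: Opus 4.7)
The strategy is to apply Lemma~\ref{l2} directly to $T$, exploiting the semi-periodic structure of $\varphi$. The polydisc algebra $\mathds{A}^2$ is analytic, its Shilov boundary $\mathds{T}^2$ is connected, and $\varphi$ is non-periodic because $\alpha_2$ is not a root of unity. Choosing roots of unity $\beta_n\to\alpha_2$ and setting $\varphi_n(z_1,z_2)=(\alpha_1 z_1,\beta_n z_2)$ produces periodic automorphisms with $\varphi_n\to\varphi$ uniformly. The crucial verification of hypothesis (c) rests on the observation that every $\varphi$-invariant probability measure on $\mathds{T}^2$ has the form $\mu=\nu\otimes m$, where $m$ is normalized Haar measure on the second torus and $\nu$ is an $\alpha_1$-invariant probability on the first torus; this follows from the unique ergodicity of the irrational rotation $z_2\mapsto\alpha_2^p z_2$ on each $\varphi^p$-invariant fiber $\{z_1\}\times\mathds{T}$. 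Since $m$ is rotation-invariant, such $\mu$ is automatically $\varphi_n$-invariant. Lemma~\ref{l2} then gives $\sigma_{a.p.}(T)=\sigma_{sf}(T)$ and that $\sigma(T)$ and $\sigma_{sf}(T)$ are connected rotation-invariant subsets of $\mathds{C}$.

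For case (1), either $w\in(\mathds{A}^2)^{-1}$ or $\min_{\mathds{T}^2}|w|=0$, so Lemma~\ref{l2}(3) gives $\sigma(T)=\sigma_{sf}(T)$. To extract the explicit formula, I apply Lemma~\ref{l3}(3): $\lambda\in\sigma_{a.p.}(T)$ iff there exists $t=(z_1^0,z_2^0)\in\mathds{T}^2$ with $|w_n(t)|\geq|\lambda|^n$ and $|w_n(\varphi^{-n}(t))|\leq|\lambda|^n$ for every $n$. Using $|w|=|\tilde{w}|$ on $\mathds{T}^2$, $\tfrac{1}{n}\log|w_n(t)|$ is a Birkhoff average of $\log|\tilde{w}|$ over the $\varphi$-orbit of $t$, whose closure is $\bigcup_{k=0}^{p-1}\{\alpha_1^k z_1^0\}\times\mathds{T}$ with unique invariant probability $\tfrac{1}{p}\sum_k\delta_{\alpha_1^k z_1^0}\otimes m$. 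Birkhoff's theorem yields the limit $\tfrac{1}{p}\sum_{k=0}^{p-1}g(\alpha_1^k z_1^0)$, where $g(z_1):=\int_{\mathds{T}}\log|\tilde{w}(z_1,z_2)|\,dm(z_2)$. Jensen's formula applied to $z_2\mapsto\tilde{w}(z_1,z_2)$ (a disc-algebra function in $z_2$) reduces $g$ to $\log|\tilde{w}(z_1,0)|$ under the case (1) hypothesis, so summing gives $\tfrac{1}{p}\log|\tilde{w}_p(z_1^0,0)|$; the two-sided inequalities of Lemma~\ref{l3}(3) then collapse to $|\lambda|^p=|\tilde{w}_p(z_1^0,0)|$. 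The spectral radius formula follows from Lemma~\ref{l5} with the same invariant-measure structure.

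For case (2), invertibility of $w$ in $C(\mathds{T}^2)$ gives $|w|\geq c>0$ on $\mathds{T}^2$, so by the maximum principle $\|Tf\|\geq c\|f\|$; $T$ is bounded below and $0\notin\sigma_{a.p.}(T)$. Yet $w\notin(\mathds{A}^2)^{-1}$ forces $1\notin R(T)$, so $0\in\sigma(T)\setminus\sigma_{a.p.}(T)$. Since $\sigma(T)$ is connected and rotation-invariant, contains $0$, and contains the outer circle $|\lambda|=\rho(T)$ inherited from the $\sigma_{a.p.}(T)$ analysis above, it must equal $\rho(T)\mathds{D}$. For $\sigma_{lsf}(T)=\rho(T)\mathds{D}$, I would invoke Lemma~\ref{l3}(5): an interior zero of $\tilde{w}$ in $\mathds{U}^2$ (forced by $w\notin(\mathds{A}^2)^{-1}$ together with $\tilde{w}(0,0)\neq 0$) combined with density of backward $\varphi$-orbits yields, for each $\lambda$ in the open disc, a boundary point $u$ satisfying the hypothesis of Lemma~\ref{l3}(5). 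The main technical obstacle is the precise translation in case (1) of the Birkhoff/Jensen data into the closed form $|\lambda|^p=|\tilde{w}_p(z_1,0)|$, and, for case (2), showing that the \emph{entire} open disc (not merely the point $0$) lies in $\sigma_{lsf}(T)$.
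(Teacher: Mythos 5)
Your overall architecture is the paper's: you classify the $\varphi$-invariant probability measures on $\mathds{T}^2$ as $\nu\otimes m$ (equivalently, the extreme ones as averaged Haar measures on the circles $\{z_1\}\times\mathds{T}$), feed this into Lemma~\ref{l2} and the spectral radius formula of Lemma~\ref{l5}, and settle case (2) by combining rotation invariance and connectedness of $\sigma(T)$ with the infinite deficiency coming from the interior zeros of $w$. The only structural difference is that the paper first treats $p=1$ and recovers general $p$ by applying the spectral mapping theorem for semi-Fredholm spectra to $T^p$, while you work with general $p$ directly through the orbit closures; that difference is immaterial.

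The genuine problem is the Jensen step. For fixed $z_1\in\mathds{T}$, Jensen's formula gives $g(z_1)=\int_{\mathds{T}}\log|\tilde w(z_1,z_2)|\,dm(z_2)=\log|\tilde w(z_1,0)|+\sum_k\log\bigl(1/|a_k(z_1)|\bigr)$, where the $a_k(z_1)$ are the zeros of the slice $z_2\mapsto\tilde w(z_1,z_2)$ in $\mathds{U}$; equality with $\log|\tilde w(z_1,0)|$ requires that slice to be zero-free in $\mathds{U}$. That is automatic when $w\in(\mathds{A}^2)^{-1}$, but it is not implied by the other half of the case (1) hypothesis ($w$ not invertible in $C(\mathds{T}^2)$), nor by the case (2) hypothesis. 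For instance, $\tilde w(z_1,z_2)=(z_2-\frac12)(2-z_1-z_2)$ satisfies all standing assumptions and falls under case (1), yet $\exp g(z_1)=|2-z_1|$ while $|\tilde w(z_1,0)|=\frac12|2-z_1|$; likewise $w=z_2-\frac12$ is in case (2) with $\exp g\equiv 1$ versus $|\tilde w(z_1,0)|\equiv\frac12$. So the asserted collapse to $|\lambda|^p=|\tilde w_p(z_1,0)|$, and your formula for $\rho(T)$, do not follow from the stated hypotheses; the invariant-measure computation actually yields $\exp\int_{\mathds{T}}\log|w(z_1,\cdot)|\,dm$ in place of $|\tilde w(z_1,0)|$. (To be transparent: the paper's own display~(\ref{eq3}) asserts the same equality with no more justification, so you have reproduced its argument faithfully, gap included.) Separately, your reading of Lemma~\ref{l3}(3) via Birkhoff averages needs everywhere-convergence, which rests on unique ergodicity and continuity of $\log|\tilde w|$ on the orbit closure and therefore breaks exactly where $w$ vanishes on $\mathds{T}^2$; and the all-$n$ two-sided inequalities are not literally equivalent to equality of the limiting averages. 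You flag these last two points yourself, but they are where the real work (carried out in~\cite{KO}) lives, so the write-up should not treat them as routine.
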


\begin{proof}
  Let us first assume that $p=1$, i.e. that $\varphi(z_1, z_2) = (z_1, \alpha z_2)$, where $\alpha$ is not a root of unity. It is immediate to see that if $\mu \in C(\mathds{T}^2)^\prime$ is an extreme point of the set of all $\varphi$-invariant probability measures, then there is a $z_1 \in \mathds{T}$ such that $\mu$ is the normalised Lebesgue measure on the circle $\{(z_1,z) : z \in \mathds{T}\}$, and therefore
  
  \begin{equation}\label{eq3}
    \exp \int \ln{|w|} d \mu = |\tilde{w}(z_1, 0)|.
  \end{equation}
  Therefore, in the case $p=1$ the statements~(\ref{eq21}) and~(\ref{eq2}) follow from~\cite[Corollary 2.7, Theorem 2.9]{KO} and from the obvious observation that in case (2) $def (T) = \infty$. 
  
  To finish the proof in general case we consider the operator $T^p$ and apply the spectral mapping theorem for Fredholm and semi-Fredholm spectra (see~\cite[Corollary 3.61, p.149]{Ai})
\end{proof}

\begin{proposition} \label{p4}
  Let $\varphi(z_1, z_2) = (\alpha_1 z_1, \alpha_2 z_2)$, where $\alpha_1, \alpha_2 \in \mathds{T}$, where both  $\alpha_1$ and $\alpha_2$ are not roots of unity.
  \begin{enumerate}
    \item If $w$ is invertible in $\mathds{A}^2$, then 
    
    \begin{equation}\label{eq7}
      \sigma(T) = \sigma_{sf}(T) = \{\lambda \in \mathds{C} : \rho_{min}(T) \leq |\lambda| < \rho(T)\}.
    \end{equation}
    \item If $w$ is not invertible in $C(\mathds{T}^2)$, then
    
    \begin{equation}\label{eq8}
      \sigma(T) = \sigma_{sf}(T) = \rho(T)\mathds{D}.
    \end{equation}
     \item If $w$ is invertible in $C(\mathds{T}^2)$ but not invertible in $\mathds{A}^2$, then
    \begin{equation}\label{eq9}
            \begin{split}
       & \sigma_{a.p.}(T) = \sigma_{sf}(T) = \{\lambda \in \mathds{C}: \rho_{min}(T,C(\mathds{T})) \leq |\lambda| \leq \rho(T)\}, \\
       & \sigma(T) = \sigma_{lsf}(T) = \rho(T)\mathds{D}.
      \end{split}
    \end{equation}
  \end{enumerate}
\end{proposition}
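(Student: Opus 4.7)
The plan is to apply Lemma~\ref{l2} to $T$ on $\mathds{A}^2$, complemented by Lemmas~\ref{l3} and~\ref{l5}, and then to distinguish the three cases by the ``level'' of non-invertibility of $w$. First I verify the hypotheses of Lemma~\ref{l2}: $\mathds{A}^2$ is analytic, $\partial\mathds{A}^2 = \mathds{T}^2$ is connected with no isolated points, and $T_\varphi$ is non-periodic since neither $\alpha_j$ is a root of unity. For the periodic approximations demanded by (b)--(c), I choose roots of unity $\alpha_j^{(n)} \to \alpha_j$ so that every multiplicative relation $\alpha_1^{k}\alpha_2^{\ell}=1$ with $(k,\ell)\in\mathds{Z}^2$ persists for the approximants. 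Every $\varphi$-invariant probability measure on $\mathds{T}^2$ is a Haar measure of a coset of the closed subgroup $H=\overline{\langle(\alpha_1,\alpha_2)\rangle}$, and such a measure is preserved by any rotation leaving $H$ invariant, so condition (c) holds. Lemma~\ref{l2}(1)--(2) then delivers $\sigma_{a.p.}(T)=\sigma_{sf}(T)$ and the fact that both $\sigma(T)$ and $\sigma_{sf}(T)$ are connected and rotation-invariant, hence closed annuli (possibly degenerate) or closed disks.

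For case (1), $w\in(\mathds{A}^2)^{-1}$ puts us in the regime of Lemma~\ref{l2}(3), yielding $\sigma(T)=\sigma_{sf}(T)$. Since $T$ is invertible, $\rho(T)\in\sigma(T)$ and $\rho_{min}(T)=1/\rho(T^{-1})\in\sigma(T)$; together with the a priori inclusion $\sigma(T)\subseteq\{\rho_{min}(T)\le|\lambda|\le\rho(T)\}$, connectedness and rotation-invariance force $\sigma(T)$ to be exactly the annulus~\eqref{eq7}. For case (2), $w\notin C(\mathds{T}^2)^{-1}$ is equivalent to $\min_{s\in\partial A}|w(s)|=0$, so Lemma~\ref{l2}(3) again gives $\sigma(T)=\sigma_{sf}(T)$; this connected rotation-invariant closed set contains $0$ (the zero of $w$ on $\mathds{T}^2$ puts the range of $T$ in a proper ideal) together with the outer circle $\rho(T)\mathds{T}$, hence equals the full disk~\eqref{eq8}.

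Case (3) requires more care. Running the same verification on $C(\mathds{T}^2)$, where now $w$ is invertible, shows $\sigma(T,C(\mathds{T}^2))=\sigma_{sf}(T,C(\mathds{T}^2))$ is the annulus with radii $\rho_{min}(T,C(\mathds{T}^2))$ and $\rho(T)$ (the outer radius agrees with $\rho(T,\mathds{A}^2)$ via Lemma~\ref{l5}). Lemma~\ref{l3}(ii) pulls this back to identify $\sigma_{a.p.}(T,\mathds{A}^2)$ with the same annulus, and Lemma~\ref{l3}(iv) combined with Lemma~\ref{l2}(1) yields $\sigma_{sf}(T)=\sigma_{a.p.}(T)$, giving the first line of~\eqref{eq9}. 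For $\sigma(T)$: a zero of $w$ inside $\mathds{U}^2$ forces $T$ to be non-invertible on $\mathds{A}^2$, so $0\in\sigma(T)$; combined with $\rho(T)\mathds{T}\subseteq\sigma_{a.p.}(T)\subseteq\sigma(T)$ and the connectedness/rotation-invariance from Lemma~\ref{l3}(i), this forces $\sigma(T)=\rho(T)\mathds{D}$.

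The hard part will be $\sigma_{lsf}(T)=\rho(T)\mathds{D}$ in case (3). Inside the annulus already obtained this follows from $\sigma_{sf}\subseteq\sigma_{lsf}$, so the issue is $|\lambda|<\rho_{min}(T,C(\mathds{T}^2))$. My plan is to apply Lemma~\ref{l3}(v), producing for each such $\lambda$ a point $u\in\mathds{T}^2$ with $|w_n(u)|\le|\lambda|^n$ and $|w_n(\varphi^{-n}(u))|\ge|\lambda|^n$ for every $n\in\mathds{N}$. Unique ergodicity of $\varphi$ on each $H$-coset provides uniform convergence $n^{-1}\log|w_n|\to\int\log|w|\,d\mu_{\mathrm{coset}}$, and pairing a coset on which this integral is small with one on which it is large, together with a continuity/selection argument across cosets, should yield the required $u$. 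Upgrading the asymptotic Birkhoff control to the ``for every $n$'' inequality demanded by Lemma~\ref{l3}(v) is the central technical obstacle.
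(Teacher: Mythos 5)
Your treatment of cases (1), (2) and of the approximate point spectrum in case (3) follows the paper's intended route (Lemma~\ref{l2} via root-of-unity approximants preserving the relation lattice, plus rotation invariance and connectedness), and is essentially a fleshed-out version of the paper's one-line proof. But the step you yourself single out as ``the central technical obstacle'' --- getting $\sigma_{lsf}(T)=\rho(T)\mathds{D}$ in case (3) via Lemma~\ref{l3}(v) --- is not merely technical: it cannot work. For $|\lambda|<\rho_{min}(T,C(\mathds{T}^2))$ there is \emph{no} point $u\in\mathds{T}^2$ with $|w_n(u)|\leq|\lambda|^n$ for all $n$: since $w$ is invertible in $C(\mathds{T}^2)$, $\ln|w|$ is continuous and every weak-$*$ limit of the Birkhoff averages along the orbit of $u$ is a $\varphi$-invariant probability measure $\mu$ with $\int\ln|w|\,d\mu\geq\ln\rho_{min}(T,C(\mathds{T}^2))$, so $\liminf_n|w_n(u)|^{1/n}\geq\rho_{min}(T,C(\mathds{T}^2))>|\lambda|$ and the required inequality fails for large $n$. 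No selection argument across cosets can repair this.

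The missing idea --- and the one the paper actually uses (``the set of zeros of $w$ in $\mathds{U}^2$ is infinite''; compare the explicit remark $def(T)=\infty$ in the proof of Proposition~\ref{p1}) --- is an index argument. Since $w$ is not invertible in $\mathds{A}^2$ it vanishes somewhere in $\mathds{D}^2$, and the zero variety of a holomorphic function of two variables has no isolated points, so $w$ vanishes on an infinite subset of $\mathds{U}^2$; hence $R(T)\subseteq w\mathds{A}^2$ has infinite codimension while $\ker T=0$, i.e.\ $T$ is upper semi-Fredholm with index $-\infty$. The disk $\{|\lambda|<\rho_{min}(T,C(\mathds{T}^2))\}$ is a connected subset of the complement of $\sigma_{usf}(T)$ containing $0$, and the index of a semi-Fredholm operator is locally constant, so every $\lambda$ in that disk has $\operatorname{ind}(T-\lambda I)=-\infty$ and thus lies in $\sigma_{lsf}(T)$; together with the annulus already in $\sigma_{sf}(T)$ this gives $\sigma_{lsf}(T)=\sigma(T)=\rho(T)\mathds{D}$. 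A secondary slip: you invoke Lemma~\ref{l2} ``on $C(\mathds{T}^2)$'', but that lemma requires an \emph{analytic} uniform algebra, which $C(\mathds{T}^2)$ is not; the $C(K)$ statements you need there come from Lemma~\ref{l3}(2)--(3) and the results of~\cite{Ki1} instead.
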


\begin{proof}
  The proof follows from Lemma~\ref{l2} and the fact that the set of zeros of $w$ in $\mathds{U}^2$ is infinite.
\end{proof}

By imposing additional conditions on the map $\varphi$  we can improve the statement of Proposition~\ref{p4}.

\begin{proposition} \label{p2}
  Let $\varphi(z_1, z_2) = (\alpha_1 z_1, \alpha_2 z_2)$, where $\alpha_1, \alpha_2 \in \mathds{T}$. Assume that $w(z_1, z_2) = z_1^s z_2^t \tilde{w}(z_1, z_2)$, where $s,t \geq 0$ and $|\tilde{w}(0,0)| >0$.  Assume additionally that either
  
 \noindent (A) $\alpha_1^p, \alpha_2^q \neq 1$ for any $p,q \in \mathds{Z}$, or
 
 \noindent (B) $\alpha_1^p =\alpha_2^q$  for some $p,q \in \mathds{N}$.
  
  \begin{enumerate}
    \item If $w$ is invertible in $\mathds{A}^2$, then 
    
    \begin{equation}\label{eq4}
      \sigma(T) = \sigma_{sf}(T) = w(0,0)\mathds{T}.
    \end{equation}
    \item If $w$ is not invertible in $C(\mathds{T}^2)$, then
    
    \begin{equation}\label{eq5}
       \sigma(T) = \sigma_{sf}(T) = \tilde{w}(0,0)\mathds{D}.
    \end{equation}
    \item If $w$ is invertible in $C(\mathds{T}^2)$ but not invertible in $\mathds{A}^2$, then
    
    \begin{equation}\label{eq6}
            \begin{split}
       & \sigma_{a.p.}(T) = \sigma_{sf}(T) = \tilde{w}(0,0)\mathds{T}, \\
       & \sigma(T) = \sigma_{lsf}(T) = \tilde{w}(0,0)\mathds{D}.
      \end{split}
    \end{equation}
  \end{enumerate}
\end{proposition}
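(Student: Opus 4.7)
The plan is to deduce Proposition \ref{p2} from Proposition \ref{p4} by showing that, under either condition (A) or (B), both $\rho(T)$ and (when relevant) the inner radius of the annulus of Proposition \ref{p4} collapse to the common value $|\tilde w(0,0)|$. Because $|w|=|\tilde w|$ on $\mathds{T}^2=\partial\mathds{A}^2$, everything reduces by Lemma \ref{l5} to computing $\exp\int_{\mathds{T}^2}\ln|\tilde w|\,d\mu$ for each $\varphi$-invariant probability measure $\mu\in M_\varphi$.

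The first step is to classify $M_\varphi$. Under (A), $\varphi$ is uniquely ergodic on $\mathds{T}^2$ by the Kronecker--Weyl theorem, so $M_\varphi=\{d\sigma\otimes d\sigma\}$, the normalised Haar measure. Under (B), after reducing to $\gcd(p,q)=1$, the continuous $\varphi$-invariant function $h(z_1,z_2)=z_1^pz_2^{-q}$ has as level sets the cosets $L_c$ ($c\in\mathds{T}$) of the one-dimensional compact subgroup $H=h^{-1}(1)\subset\mathds{T}^2$; the ergodic elements of $M_\varphi$ are exactly the normalised Haar measures $\mu_c$ on these cosets, each of which admits the analytic parametrisation $\Psi_c(\zeta)=(\lambda_1\zeta^q,\lambda_2\zeta^p)$ with $(\lambda_1,\lambda_2)\in L_c$.

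The second step is to reduce each integral to classical one-dimensional Jensen. The composition $F_c(\zeta)=\tilde w(\Psi_c(\zeta))$ belongs to the disc algebra with $F_c(0)=\tilde w(0,0)\ne 0$, and Jensen's formula gives
\begin{equation*}
\int_{\mathds{T}}\ln|F_c|\,d\sigma=\ln|\tilde w(0,0)|+\sum_{|z|<1,\,F_c(z)=0}\ln|z|^{-1},
\end{equation*}
with an analogous iterated identity covering the Haar measure on $\mathds{T}^2$ in case (A). In case (1), $w=\tilde w$ is invertible in $\mathds{A}^2$, so no slice function has zeros in $\mathds{U}$ and $\int\ln|w|\,d\mu=\ln|w(0,0)|$ holds uniformly in $\mu$; applying Lemma \ref{l5} to both $T$ and $T^{-1}$ forces $\rho(T)=\rho_{\min}(T)=|w(0,0)|$, collapsing the annulus of Proposition \ref{p4}(1) to the circle \eqref{eq4}. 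The same computation yields $\rho(T)=|\tilde w(0,0)|$ in cases (2)--(3) and $\rho_{\min}(T,C(\mathds{T}^2))=|\tilde w(0,0)|$ in case (3); combined with Proposition \ref{p4}(2)--(3) this delivers \eqref{eq5} and \eqref{eq6}.

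The main obstacle will be the bookkeeping under (B): one must choose base points and exponents in $\Psi_c$ so that it genuinely sends $\mathds{U}$ into $\mathds{U}^2$ with $\Psi_c(0)=(0,0)$, and verify that for each admissible $(\lambda_1,\lambda_2)$ the Jensen sum above vanishes. This rests on the fact that the interior zeros of $w$ in $\mathds{U}^2$ are already absorbed into the monomial factor $z_1^sz_2^t$, so that $\tilde w$ is zero-free on $\mathds{U}^2$, together with a separate check in case (2) that the boundary zeros of $\tilde w$ do not contribute to the 1-D integral.
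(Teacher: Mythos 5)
Your plan is essentially the paper's own proof: both arguments classify the $\varphi$-invariant probability measures (Haar measure on $\mathds{T}^2$ under (A); Haar measures on the cosets of the one-dimensional closed subgroup $\{z_1^p=z_2^q\}$ under (B)), then use Lemma~\ref{l5} together with a Jensen-type computation along the analytic parametrisation $\zeta\mapsto(\lambda_1\zeta^q,\lambda_2\zeta^p)$ to conclude $\rho(T)=\rho_{min}(T)=|\tilde w(0,0)|$, after which Proposition~\ref{p4} (equivalently Lemma~\ref{l2}) finishes the argument. The only deviation is cosmetic: in case (B) the paper passes to $T^q$ and invokes the spectral mapping theorem for essential spectra while you classify $M_\varphi$ directly; just note that the reduction to $\gcd(p,q)=1$ is not automatic (the subgroup $h^{-1}(1)$ and the orbit closures may be disconnected), though the coset parametrisation and the Jensen computation go through component by component, and that the ``separate check'' you defer in case (2) --- that boundary zeros of $\tilde w$ contribute nothing, i.e.\ that the slice functions have no singular inner factor --- is likewise left implicit in the paper's own proof.
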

 
\begin{proof}
 (A) The condition (A) of the proposition guarantees that the only $\varphi$-invariant probability measure on $\mathds{T}^2$ is the normalised Lebesgue measure. Hence $\rho_{min}(T) = \rho(T) = |\tilde{w}(0,0)|$.
 
 (B) Consider the operator $T^q$. Notice that 
    
    \begin{equation}\label{eq22} 
      (T^qf)(z_1,z_2) = w_q(z_1,z_2)f(\alpha_1^q z_1, \alpha_1^p z_2).
    \end{equation}
    Let $\mu$ be an extreme point of the set of all $\varphi^q$-invariant probability measures on $\mathds{T}^2$. It is easy to see that there is $(z_1, z_2) \in \mathds{T}^2$ such that 
    $supp(\mu) = cl\{(\alpha_1^{qn}z_1, \alpha_1^{pn} z_2), n \in \mathds{N}\}$. 
    
    Therefore $\exp \int \log |w_q| d\mu = |\tilde{w}_q(0,0)|$, and the statement of the theorem for the operator $T^q$ follows. It remains to apply the spectral mapping theorem for essential spectra. 
\end{proof}

As Example~\ref{e1} shows, the statement of Proposition~\ref{p2} is in general not valid in the case when
there are $p,q \in \mathds{Z}$ such that  $\alpha_1^p \alpha_2^q =1.$

\begin{example} \label{e1} Let $\alpha \in \mathds{T}$ be not a root of unity. Let 
$\varphi(z_1, z_2) = (\alpha z_1, \bar{\alpha} z_2), (z_1, z_2) \in \mathds{T}^2$, and let
\begin{equation*}
  (Tf)(z_1, z_2) = (2+z_1z_2)f(\varphi(z_1,z_2)), f \in \mathds{A}^2.
\end{equation*}
It is not difficult to see that 
\begin{equation*}
  \sigma_{sf}(T) = \sigma(T) = \{\lambda \in \mathds{C} : 1 \leq |\lambda| \leq 3\}.
\end{equation*}
\end{example}

\begin{proposition} \label{p5}
  Let $\varphi(z_1, z_2) = (\alpha z_1, \psi(z_2))$, where $\alpha$ is a primitive $p^{th}$ root of unity and $\psi$ is a parabolic Möbius transformation of $\mathds{U}$. Let $\zeta$ be the fixed point of $\psi$. Let
$r = \min \limits_{\varsigma \in \mathds{T}} |w_p(\varsigma, \zeta)|^{1/p}$ and $R = \max \limits_{\varsigma \in \mathds{T}} |w_p(\varsigma, \zeta)|^{1/p}$.
  \begin{enumerate}
    \item If $w$ is invertible in $\mathds{A}^2$, then
    
    \begin{equation}\label{eq10}
      \sigma(T) = \sigma_{sf}(T) = \mathds{AN}(r,R).
    \end{equation}
\item If $w$ is not invertible in $C(\mathds{T}^2)$, then
        \begin{equation}\label{eq12}
      \sigma(T) = \sigma_{sf}(T) = R\mathds{D}.
    \end{equation}
    \item If $w$ is not invertible in $\mathds{A}^2$ but invertible in $C(\mathds{T}^2)$, then
        \begin{equation}\label{eq 11}
      \begin{split}
      & \sigma_{a.p.}(T) = \sigma_{sf}(T) = \mathds{AN}(r, R), \\
      & \sigma(T) = \sigma_{lsf}(T) = R\mathds{D}. \\   
      \end{split}
    \end{equation}
      \end{enumerate}
  \end{proposition}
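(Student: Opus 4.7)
The plan is to pass to $S := T^p$ and then invoke the spectral mapping theorem for essential spectra. Since $\alpha^p=1$, we have $(Sf)(z_1,z_2) = w_p(z_1,z_2)\,f(z_1,\psi^p(z_2))$, so $S = w_p T_{\varphi^p}$ is a non-periodic weighted automorphism of the analytic uniform algebra $\mathds{A}^2$; its underlying map $\varphi^p(z_1,z_2) = (z_1,\psi^p(z_2))$ is the identity in the first coordinate and the parabolic Möbius $\psi^p$ in the second, and the circle $\mathds{T}\times\{\zeta\}\subset\partial\mathds{A}^2=\mathds{T}^2$ consists entirely of fixed points of $\varphi^p$, attracting every other orbit in both forward and backward time.

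The two extremal radii of $S$ are computed via Lemma~\ref{l5}. Since the only $\psi^p$-invariant probability measure on $\mathds{T}$ is $\delta_\zeta$, every $\varphi^p$-invariant probability measure on $\mathds{T}^2$ is of the form $\nu\otimes\delta_\zeta$, and hence
\[
\rho(S)=\max_{\varsigma\in\mathds{T}}|w_p(\varsigma,\zeta)|=R^p,\qquad \rho_{min}(S)=r^p\quad(\text{if invertible}),
\]
the second identity coming from the same analysis applied to $S^{-1}$. For the lower bound on the essential spectra, each $t=(\varsigma,\zeta)$ is fixed by $\varphi^p$, so the orbital conditions in Lemma~\ref{l3}(3) and (5) (applied to $S$) are satisfied with equality at every $\mu$ with $|\mu|=|w_p(\varsigma,\zeta)|$; letting $\varsigma$ range over $\mathds{T}$ and using Lemma~\ref{l3}(4) yields $\mathds{AN}(r^p,R^p)\subseteq\sigma_{a.p.}(S)\cap\sigma_{lsf}(S)=\sigma_{sf}(S)$. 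Meanwhile Lemma~\ref{l3}(1) guarantees that $\sigma(S)$ is closed, connected, and rotation invariant, so it is necessarily a closed annulus or a closed disk.

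Assembling the three cases: in case~(1), invertibility of $w$ forces $\sigma(S)\subseteq\mathds{AN}(r^p,R^p)$, so together with the preceding lower bound $\sigma(S)=\sigma_{sf}(S)=\mathds{AN}(r^p,R^p)$. In case~(2), $w$ vanishes on $\mathds{T}^2$, making $S$ fail semi-Fredholmness at $0$ (its range lies in a closed ideal of infinite codimension determined by the zero set of $w_p$ on $\mathds{T}^2$); the connected, rotation invariant $\sigma(S)\subseteq R^p\mathds{D}$ containing $\mathds{AN}(r^p,R^p)$ and $0$ is then forced to equal $R^p\mathds{D}$, and similarly for $\sigma_{sf}(S)$. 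In case~(3), $w$ is invertible on $\mathds{T}^2$ but vanishes on an infinite analytic variety inside $\mathds{U}^2$; the boundary-algebra identity of Lemma~\ref{l3}(2) together with the case~(1) analysis applied on $\mathds{T}^2$ gives $\sigma_{a.p.}(S)=\sigma_{sf}(S)=\mathds{AN}(r^p,R^p)$, while infinite defect plus rotation invariance of $\sigma_{lsf}$ forces $\sigma(S)=\sigma_{lsf}(S)=R^p\mathds{D}$. The spectral mapping theorem then converts each equality for $S$ into the corresponding equality for $T$.

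The two places where real work is required are (i) the description of $\varphi^p$-invariant probability measures on $\mathds{T}^2$, which reduces to the classical parabolic-orbit fact that $\delta_\zeta$ is the only $\psi^p$-invariant probability on $\mathds{T}$; and (ii) verifying in cases~(2) and~(3) that the inner disk $\{|\mu|<r^p\}$ actually lies in $\sigma_{sf}$ respectively $\sigma_{lsf}$ rather than sitting in a spectral gap. The second point is the main obstacle: it will be handled by exhibiting singular approximate eigen-sequences (or singular sequences in the dual) concentrated near the zero variety of $w_p$, using the positive dimension of that variety to produce infinite defect and then propagating via rotation invariance and the connectedness afforded by Lemma~\ref{l3}(1).
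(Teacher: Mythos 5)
Your overall strategy --- pass to $S=T^p$, classify the $\varphi^p$-invariant probability measures on $\mathds{T}^2$ as $\nu\otimes\delta_\zeta$, compute $\rho(S)=R^p$ and $\rho_{min}(S)=r^p$ from Lemma~\ref{l5}, and extract $\mathds{AN}(r^p,R^p)\subseteq\sigma_{a.p.}(S)\cap\sigma_{lsf}(S)=\sigma_{sf}(S)$ from the circle of fixed points $\mathds{T}\times\{\zeta\}$ via Lemma~\ref{l3}(3)--(5) --- is sound and is essentially the paper's route (the paper invokes Lemmas~\ref{l3}, \ref{l5} and \cite[Theorem 3.2]{Ki1}). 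Case (1) is complete as you state it.

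The gap is the inner disk $\{|\lambda|<r^p\}$ in cases (2) and (3), and your announced fix does not work. You propose to get $r^p\mathds{D}\subseteq\sigma_{sf}(S)$ (case (2)) and $r^p\mathds{D}\subseteq\sigma_{lsf}(S)$ (case (3)) by ``propagating via rotation invariance and the connectedness afforded by Lemma~\ref{l3}(1).'' But Lemma~\ref{l3}(1) asserts connectedness of $\sigma(T)$ only; nothing in the paper gives connectedness of $\sigma_{sf}$ or $\sigma_{lsf}$ in the parabolic case, and Lemma~\ref{l2}, which would, is not applicable: its hypotheses (b), (c) require periodic automorphisms $\varphi_n\to\varphi^p$ preserving every $\varphi^p$-invariant measure, and a nontrivial periodic (hence elliptic) M\"obius map cannot fix the boundary point $\zeta$, so no such $\varphi_n$ preserve $\nu\otimes\delta_\zeta$. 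Rotation invariance of a set containing $0$ and the annulus yields circles, not the open disk $0<|\lambda|<r^p$. What actually closes the argument: in case (3), observe as the paper does that $\sigma_r(T)=r\mathds{U}$ is a connected open subset of the semi-Fredholm domain on which $\lambda I-T$ is injective with closed range, so the index is constant there and equals its value at $\lambda=0$, namely $-\infty$ because $\ker T=\{0\}$ while $\operatorname{def}T=\infty$ (the zero variety of $w$ in $\mathds{U}^2$ is infinite); hence $r\mathds{U}\subseteq\sigma_{lsf}(T)$. In case (2) the inclusion $R^p\mathds{D}\subseteq\sigma_{a.p.}(S)$ is genuinely nontrivial: for each $0<|\lambda|<R^p$ you must exhibit a point whose backward orbit meets the zero set of $w_p$ while all forward partial products $|w_{p,n}|$ stay $\geq|\lambda|^n$, which requires starting the orbit at the minimum of the partial sums of $\log|w_p|-\log|\lambda|$; this is the content of \cite[Theorem 3.2]{Ki1}, which the paper cites and you do not. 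Your ``singular sequences concentrated near the zero variety'' handle only $\lambda=0$.
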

  
\begin{proof}
  (1) and (2) follow from Lemma\ref{l3}, Lemma~\ref{l5}, and~\cite[Theorem 3.2]{Ki1}. To prove (3) it suffices to notice that
$\sigma_r(T) = r\mathds{U}$ and that $def \, T = \infty$ (recall that the set of zeros of $w$ in $\mathds{U}^2$ is infinite).
\end{proof}
  
  \begin{proposition} \label{p6}
    Let $\varphi(z_1, z_2) = (\alpha z_1, \psi(z_2))$, where $\alpha$ is not a root of unity and $\psi$ is a parabolic Möbius transformation of $\mathds{U}$. Let $\zeta$ be the fixed point of $\psi$. Assume that 
    $w(z_1,z_2) = z_1^p z_2^q \tilde{w}(z_1,z_2)$, where $\tilde{w}(0,0) \neq 0$
   \begin{enumerate}
    \item If $w$ is invertible in $\mathds{A}^2$, then
    
    \begin{equation}\label{eq13}
      \sigma(T) = \sigma_{sf}(T) = w(0, \zeta)\mathds{T}.
    \end{equation}
 \item If $w$ is not invertible in $C(\mathds{T}^2)$, then
        \begin{equation}\label{eq15}
     \sigma(T) = \sigma_{sf}(T) =  \left\{
        \begin{array}{ll}
        \tilde{w}(0, \zeta)\mathds{D}  , & \hbox{if 
        $\int \limits_0^{2\pi} ln|w(e^{i\theta}, \zeta)| d\theta > -\infty$ ;} \\
          \{0\}, & \hbox{if $\int \limits_0^{2\pi} ln|w(e^{i\theta},\zeta)| d\theta = -\infty$ .}
        \end{array}
      \right.
    \end{equation}
    \item If $w$ is not invertible in $\mathds{A}^2$ but invertible in $C(\mathds{T}^2)$, then
        \begin{equation}\label{eq 14}
      \begin{split}
      & \sigma_{a.p.}(T) = \sigma_{sf}(T) =  \tilde{w}(0, \zeta)\mathds{T}, \\
      & \sigma(T) = \sigma_{lsf}(T) = \tilde{w}(0, \zeta)\mathds{D}.  \\   
      \end{split}
    \end{equation}
         \end{enumerate} 
  \end{proposition}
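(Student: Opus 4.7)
The strategy mirrors Propositions~\ref{p4} and~\ref{p5}: identify the $\varphi$-invariant probability measures on $\mathds{T}^2$, compute $\rho(T)$ via Lemma~\ref{l5}, and then locate the essential spectra by combining Lemma~\ref{l3} with Lemma~\ref{l4}.

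The first step is to observe that the only $\varphi$-invariant probability measure on $\mathds{T}^2$ is $\mu = m_{\mathds{T}} \otimes \delta_\zeta$, where $m_{\mathds{T}}$ is normalised Lebesgue measure. Indeed, the irrationality of $\alpha$ forces the first marginal of any such measure to be $m_{\mathds{T}}$, while the parabolicity of $\psi$ (whose unique fixed point on $\mathds{T}$ is $\zeta$, attracting every orbit both forward and backward) forces the second marginal to be $\delta_\zeta$. Lemma~\ref{l5} then gives
\begin{equation*}
  \rho(T) = \exp \int_0^{2\pi} \log |w(e^{i\theta}, \zeta)|\, \frac{d\theta}{2\pi} = \exp \int_0^{2\pi} \log |\tilde{w}(e^{i\theta}, \zeta)|\, \frac{d\theta}{2\pi},
\end{equation*}
the factor $|e^{i\theta}|^p |\zeta|^q = 1$ dropping out. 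Jensen's formula applied to the analytic slice $z \mapsto \tilde{w}(z, \zeta) \in \mathds{A}^1$ identifies this quantity with $|\tilde{w}(0, \zeta)|$ whenever that slice is zero-free in $\mathds{U}$, whereas the integral diverges to $-\infty$ precisely in the degenerate subcase of (2), giving $\rho(T) = 0$.

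Next I would set $K_1 = K_2 = \mathds{T} \times \{\zeta\}$ and $O = \mathds{T}^2 \setminus K_1$ in Lemma~\ref{l4}: each $\varphi$-orbit in $O$ accumulates on $K_1$ both forward and backward, as required. The restriction of $T$ to $C(K_1)$ is a weighted irrational rotation, so its spectral radius equals $\rho(T)$; when $w$ does not vanish on $K_1$ (cases (1) and (3)), its minimum spectral radius coincides with $\rho(T)$ as well, and Lemma~\ref{l4}(3) places the circle $\rho(T)\mathds{T}$ in $\sigma_{sf}(T)$. Lemma~\ref{l3} parts~(1)--(2) then supply rotation invariance and connectedness of $\sigma(T)$ and $\sigma_{a.p.}(T)$. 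In case (1), $w$ invertible in $\mathds{A}^2$ forces $p = q = 0$, hence $\tilde{w} = w$ and $\rho_{min}(T) = \rho(T) = |w(0, \zeta)|$, collapsing the spectrum onto $w(0, \zeta)\mathds{T}$. In case (3), $w$ has infinitely many zeros in $\mathds{D}^2 \setminus \mathds{T}^2$, so $\mathrm{def}\, T = \infty$ and, as in Proposition~\ref{p5}(3), the residual spectrum fills the open disc $\rho(T)\mathds{U}$, yielding $\sigma(T) = \sigma_{lsf}(T) = \rho(T)\mathds{D}$. For case (2), if $\rho(T) > 0$ then $w$ has zeros in $O$ and Lemma~\ref{l4}(4) gives $\rho(T)\mathds{D} \subseteq \sigma_{usf}(T) \cap \sigma_{lsf}(T)$; if $\rho(T) = 0$ the spectrum collapses to $\{0\}$.

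The main obstacle I anticipate is the careful Jensen/outer-factorisation bookkeeping needed to identify the logarithmic integral with $|\tilde{w}(0, \zeta)|$ under the hypotheses of each case, together with isolating the subcase of (2) in which $\int_0^{2\pi} \log |w(e^{i\theta}, \zeta)|\, d\theta = -\infty$ forces the spectrum down to $\{0\}$ rather than a disc.
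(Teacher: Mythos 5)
Your proof follows essentially the same route as the paper's, which simply refers back to Proposition~\ref{p5} (i.e.\ to Lemmas~\ref{l3} and~\ref{l5} together with the results of~\cite{Ki1} encapsulated in Lemma~\ref{l4}, plus the $\sigma_r(T)$ and $\mathrm{def}\,T=\infty$ observation for case (3)); your identification of $m_{\mathds{T}}\otimes\delta_\zeta$ as the unique $\varphi$-invariant probability measure and your choice $K_1=K_2=\mathds{T}\times\{\zeta\}$, $O=\mathds{T}^2\setminus K_1$ in Lemma~\ref{l4} are exactly the intended ingredients. The Jensen point you flag as an obstacle is real, but it is a defect of the statement rather than of your argument: Lemma~\ref{l5} produces the radius $\exp\bigl(\tfrac{1}{2\pi}\int_0^{2\pi}\log|\tilde w(e^{i\theta},\zeta)|\,d\theta\bigr)$, and this equals $|\tilde w(0,\zeta)|$ only when the slice $z\mapsto\tilde w(z,\zeta)$ is zero-free in $\mathds{U}$ --- which is automatic in case (1), where $w$ is invertible in $\mathds{A}^2$, but is not guaranteed by the hypotheses of cases (2) and (3), so there the radius should really be read as that geometric mean.
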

 
\begin{proof}
  Similar to the proof of Proposition~\ref{p5}.
\end{proof}
  
  \begin{proposition} \label{p7}
    Let $\varphi(z_1, z_2) = (\phi(z_1), \psi(z_2))$, where $\phi$ and  $\psi$ are parabolic Möbius transformations of $\mathds{U}$. Let $\varsigma$ and $\zeta$ be the fixed points of $\phi$ and $\psi$, respectively. 
   \begin{enumerate}
    \item If $w$ is invertible in $\mathds{A}^2$, then
    
    \begin{equation}\label{eq16}
      \sigma(T) = \sigma_{sf}(T) = w(\varsigma, \zeta)\mathds{T}.
    \end{equation}
 \item If $w$ is not invertible in $C(\mathds{T}^2)$, then
        \begin{equation}\label{eq18}
      \sigma(T) = \sigma_{sf}(T) = w(\varsigma, \zeta)\mathds{D}.
    \end{equation}
    \item If $w$ is not invertible in $\mathds{A}^2$ but invertible in $C(\mathds{T}^2)$, then
        \begin{equation}\label{eq 17}
      \begin{split}
      & \sigma_{a.p.}(T) = \sigma_{sf}(T) =  w(\varsigma, \zeta)\mathds{T}, \\
      & \sigma(T) = \sigma_{lsf}(T) = w(\varsigma, \zeta)\mathds{D}.  \\   
      \end{split}
    \end{equation}
     \end{enumerate} 
  \end{proposition}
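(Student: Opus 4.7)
My plan is to follow the same template used for Propositions~\ref{p5} and~\ref{p6}, specialized to the purely parabolic setting where both $\phi$ and $\psi$ contract every non-fixed point on $\mathds{T}$ toward the boundary fixed points $\varsigma$ and $\zeta$. The decisive simplification is that on $\mathds{T}^2$ the map $\varphi=(\phi,\psi)$ has a unique fixed point $(\varsigma,\zeta)$ and every other orbit, both forward and backward, accumulates only there. This lets me apply Lemma~\ref{l4} with $K_1=K_2=\{(\varsigma,\zeta)\}$ and $O=\mathds{T}^2\setminus\{(\varsigma,\zeta)\}$; the accumulation condition holds because a parabolic $\phi$ satisfies $\phi^{\pm n}(z)\to\varsigma$ for every $z\in\mathds{T}$, and similarly for $\psi$.

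The next step is to determine the spectral radius through Lemma~\ref{l5}. Parabolic maps on $\mathds{T}$ are conjugate to an integer translation of $\mathds{R}$ with a fixed point at infinity, and therefore admit only the point mass at their fixed point as an invariant probability measure. This uniqueness transfers to the product, so the only $\varphi$-invariant Borel probability measure on $\mathds{T}^2$ is $\delta_{(\varsigma,\zeta)}$, which yields $\rho(T)=|w(\varsigma,\zeta)|$. The same reasoning applied to $T^{-1}$ in Case~(1) gives $\rho_{min}(T)=|w(\varsigma,\zeta)|$.

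Case~(1): $\sigma(T)$ is a connected, rotation-invariant subset of $\mathds{C}$ by Lemma~\ref{l3}(1), hence trapped in the circle $|w(\varsigma,\zeta)|\mathds{T}$ because $\rho=\rho_{min}$. Lemma~\ref{l4}(3) provides the reverse inclusion $w(\varsigma,\zeta)\mathds{T}\subseteq\sigma_{sf}(T)$, since the one-point spaces $C(K_i)$ reduce $T$ to scalar multiplication by $w(\varsigma,\zeta)$. Case~(2): Here $w$ has a zero on $\mathds{T}^2$. Assuming $w(\varsigma,\zeta)\neq 0$ (otherwise $\rho(T)=0$ and $\sigma(T)=\{0\}$ trivially), I would select a zero $k\in O$ of $w$ whose $\varphi$-orbit leaves the zero set after finitely many steps; Lemma~\ref{l4}(4) then yields $|w(\varsigma,\zeta)|\mathds{D}\subseteq\sigma_{usf}(T)\cap\sigma_{lsf}(T)=\sigma_{sf}(T)$, matching the upper bound from $\rho(T)=|w(\varsigma,\zeta)|$. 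Case~(3): Since $w$ is invertible in $C(\mathds{T}^2)$, Lemma~\ref{l3}(2) allows the computation of $\sigma_{a.p.}(T)$ to be carried out on $C(\mathds{T}^2)$, and the same invariant-measure argument yields $\sigma_{a.p.}(T)=w(\varsigma,\zeta)\mathds{T}$. Lemma~\ref{l3}(4) promotes this to $\sigma_{usf}(T)$, and Lemma~\ref{l4}(3) again provides $w(\varsigma,\zeta)\mathds{T}\subseteq\sigma_{sf}(T)$. For the residual part, because $w$ vanishes somewhere inside $\mathds{U}^2$, the argument from the proof of Proposition~\ref{p5}(3) gives $def\,(\lambda I-T)=\infty$ and $\sigma_r(T)=w(\varsigma,\zeta)\mathds{U}$, whence $\sigma(T)=\sigma_{lsf}(T)=w(\varsigma,\zeta)\mathds{D}$.

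The main obstacle is the selection in Case~(2): producing a zero $k\in O$ of $w$ for which the orbit $\{\varphi^n(k):n\in\mathds{Z}\}$ intersects the zero set of $w$ only finitely often. The analyticity of $\mathds{A}^2$ forces the zero set of $w$ on $\mathds{T}^2$ to be a closed proper subset with empty interior, while the orbits of $\varphi$ on $O$ are countable sequences accumulating only at $(\varsigma,\zeta)$; these two facts should combine through a genericity argument to supply such a $k$, but this is the only point in the proof requiring a nontrivial qualitative appeal beyond the direct application of Lemmas~\ref{l3},~\ref{l4}, and~\ref{l5}.
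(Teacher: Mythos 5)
Your proof is correct and takes essentially the paper's route: the paper's own proof just says ``similar to Proposition~\ref{p5}'', i.e., it rests on the uniqueness of the $\varphi$-invariant probability measure $\delta_{(\varsigma,\zeta)}$ together with Lemmas~\ref{l3}, \ref{l4} (equivalently the results cited from~\cite{Ki1}), and~\ref{l5}. The one point you flag as a possible gap, in Case~(2), in fact closes immediately: if $w(\varsigma,\zeta)=0$ then $\rho(T)=0$ and the claim is trivial, while if $w(\varsigma,\zeta)\neq 0$ then \emph{every} zero $k\in O$ of $w$ satisfies the hypothesis of Lemma~\ref{l4}(4), because the two-sided orbit of $k$ accumulates only at $(\varsigma,\zeta)$ and therefore eventually stays in a neighborhood on which $w$ does not vanish, so the set $\{n\in\mathds{Z}: w(\varphi^n(k))=0\}$ is automatically finite and no genericity argument is needed.
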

\begin{proof}
  Similar to the proof of Proposition~\ref{p5}.
\end{proof}

\begin{proposition} \label{p8}
  Let $\varphi(z_1, z_2) = (\alpha z_1, \psi(z_2))$, where $\alpha$ is a $p^{th}$ primitive root of unity and $\psi$ is a hyperbolic Möbius transformation of $\mathds{U}$. Let $\zeta_1, \zeta_2$ be the fixed points of $\psi$. Assume that $|\psi^\prime(\zeta_1)| < 1$.

\noindent For any $\varsigma \in \mathds{T}$ let $a(\varsigma) = |w_p(\varsigma, \zeta_1)|^{1/p}$ and 
$b(\varsigma) = |w_p(\varsigma, \zeta_2)|^{1/p}$. 
Let $r = \min \limits_{\varsigma \in \mathds{T}} \{|a(\varsigma)|, |b(\varsigma)|\}$ and
$R = \max \limits_{\varsigma \in \mathds{T}} \{|a(\varsigma)|, |b(\varsigma)|\}$.

Also, let $\mathds{E} = \{\varsigma \in \mathrm{T} : |a(\varsigma)| \geq |b(\varsigma)|\}$.

  \begin{enumerate}
    \item If $w$ is invertible in $\mathds{A}^2$, then
    
    \begin{equation}\label{eq19}
      \begin{split}
      & \sigma(T) = \mathds{AN}(r, R).  \\
      & \sigma_{usf}(T) = \bigcup \limits_{\varsigma \in \mathds{E}} \mathds{AN}(|b(\varsigma)|, |a(\varsigma)|). \\
      & \sigma_{lsf}(T) \supseteq cl \bigcup \limits_{\varsigma \in \mathds{T} \setminus \mathds{E}} \mathds{AN}(|a(\varsigma)|, |b(\varsigma)|). \\
      \end{split}
    \end{equation}
    \item If $w$ is not invertible in $\mathds{A}^2$ but invertible in $C(\mathds{T}^2)$, then 
    \begin{equation}\label{eq20}
       \begin{split}
      & \sigma(T) = R\mathds{D}.  \\
      & \sigma_{usf}(T) = \bigcup \limits_{\varsigma \in \mathds{E}} \mathds{AN}(|b(\varsigma)|, |a(\varsigma)|). \\
      & \sigma_{lsf}(T) \supseteq r\mathds{D} \cup cl \bigcup \limits_{\varsigma \in \mathds{T} \setminus \mathds{E}} \mathds{AN}(|a(\varsigma)|, |b(\varsigma)|). \\
      \end{split}
    \end{equation}
    
    \item If $w$ is not invertible in $C(\mathds{T}^2)$, then
    \begin{equation}\label{eq23}
      \begin{split}
          & \sigma(T) = R\mathds{D}. \\
      & \sigma_{usf}(T) = \bigcup \limits_{\varsigma \in \mathds{E}} \mathds{AN}(|b(\varsigma)|, |a(\varsigma)|) \cup \bigcup \limits_{\varsigma \in \mathds{F}} |a(\varsigma)|\mathds{D}, \\
      & \text{where} \; \mathds{F} = \{\varsigma \in \mathds{T} : \exists z \in \mathds{T} \; \text{such that} \; w(\varsigma, z) = 0 \}. \\
       & \sigma_{lsf}(T) \supseteq cl \bigcup \limits_{\varsigma \in \mathds{T} \setminus \mathds{E}}
\mathds{AN}(|a(\varsigma)|, |b(\varsigma)|) \cup \bigcup \limits_{\varsigma \in \mathds{F}} |b(\varsigma)|\mathds{D}.  \\
      \end{split}
    \end{equation}
  \end{enumerate}
\end{proposition}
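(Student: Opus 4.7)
The plan is to combine the $p$-periodic structure in $z_1$ with the hyperbolic dynamics in $z_2$. I would first use the spectral mapping theorem~\cite[Corollary 3.61]{Ai} to pass to $T^p$, whose symbol $\varphi^p(z_1,z_2)=(z_1,\psi^p(z_2))$ leaves every fiber $\{\varsigma\}\times\mathds{D}$ invariant. The tori $K_1=\mathds{T}\times\{\zeta_1\}$ and $K_2=\mathds{T}\times\{\zeta_2\}$ are closed, disjoint, $\varphi$-invariant, and every point $t\in O:=\mathds{T}^2\setminus(K_1\cup K_2)$ satisfies $\varphi^n(t)\to K_1$ and $\varphi^{-n}(t)\to K_2$ by the attraction/repulsion of $\psi$, so the hypotheses of Lemma~\ref{l4} are met. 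On $K_1$ the operator $T$ is a weighted $p$-periodic rotation in $\varsigma$, so Lemma~\ref{l1} together with continuity of $a$ yields $\rho(T,C(K_1))=\max_{\varsigma}a(\varsigma)$ and, when $w|_{K_1}$ is invertible, $\rho_{\min}(T,C(K_1))=\min_{\varsigma}a(\varsigma)$; the analogous formulas with $b$ hold on $K_2$.

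Since $\mathds{T}^2$ has no isolated points, Lemma~\ref{l3}(4) gives $\sigma_{usf}(T)=\sigma_{a.p.}(T)$, and I would characterise $\sigma_{a.p.}(T)$ fiberwise via Lemma~\ref{l3}(3). For $\varsigma\in\mathds{E}$ I look at the fiber operator: a weighted hyperbolic automorphism of the disc algebra with weight $v(z_2)=w_p(\varsigma,z_2)$ satisfying $|v(\zeta_1)|=a(\varsigma)^p\geq b(\varsigma)^p=|v(\zeta_2)|$. Using the exponential attraction of $\psi^{\pm n}(z_2)$ to $\zeta_{1,2}$ and analyticity of $w$ one builds, for every $|\lambda|\in[b(\varsigma),a(\varsigma)]$, a point $t\in\mathds{T}^2$ with $|w_n(t)|\geq|\lambda|^n$ and $|w_n(\varphi^{-n}(t))|\leq|\lambda|^n$ for all $n$, obtaining $\mathds{AN}(b(\varsigma),a(\varsigma))\subseteq\sigma_{usf}(T)$. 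Conversely the asymptotic rates $|w_n(t)|^{1/n}\to a(\varsigma)$ and $|w_n(\varphi^{-n}(t))|^{1/n}\to b(\varsigma)$ force the opposite containment. Dually, for $\varsigma\notin\mathds{E}$ Lemma~\ref{l3}(5) supplies the inclusion $cl\bigcup_{\varsigma\notin\mathds{E}}\mathds{AN}(a(\varsigma),b(\varsigma))\subseteq\sigma_{lsf}(T)$.

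The extra discs in cases~(2) and~(3) come from Lemma~\ref{l4}(4): a fiber zero of $w$ with $\varsigma\in\mathds{F}$ lies in $O$ with finite zero-orbit, flooding $|a(\varsigma)|\mathds{D}$ into $\sigma_{usf}$ and $|b(\varsigma)|\mathds{D}$ into $\sigma_{lsf}$, while the global loss of invertibility in case~(2) gives the disc $r\mathds{D}\subseteq\sigma_{lsf}(T)$. The shape of $\sigma(T)$ itself then follows from Lemma~\ref{l3}(1): $\sigma(T)$ is connected and rotation invariant; in case~(1) the absence of residual spectrum combined with the bounds on $\rho$ and $\rho_{\min}$ gives the annulus $\mathds{AN}(r,R)$, while in cases~(2) and~(3) the noninvertibility of $w$ places $0$ in $\sigma(T)$ and forces the full disc $R\mathds{D}$.

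The main obstacle is the first half of the $\sigma_{usf}$ argument: producing a single point $t\in\mathds{T}^2$ for which the two inequalities of Lemma~\ref{l3}(3) hold for every $n\in\mathds{N}$, not merely asymptotically. This requires using the exponential rate of $\psi^{\pm n}(z_2)\to\zeta_{1,2}$ uniformly in a neighborhood together with analyticity of $w$, and it is exactly the delicacy that prevents one from upgrading the stated containment for $\sigma_{lsf}$ to an equality, in parallel with the one-variable hyperbolic case of~\cite[Problem~3.7]{KO}.
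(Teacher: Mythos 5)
Your overall route is the one the paper takes: split $\mathds{T}^2$ into $K_1=\mathds{T}\times\{\zeta_1\}$, $K_2=\mathds{T}\times\{\zeta_2\}$ and the transient set $O$, feed this into Lemma~\ref{l4}, get the relevant spectral radii from Lemmas~\ref{l1} and~\ref{l5}, and identify $\sigma_{usf}(T)$ with $\sigma_{a.p.}(T)$ (you use Lemma~\ref{l3} where the paper cites Lemma~\ref{l2}, but that substitution is harmless). One small caution before the main point: since $\alpha^p=1$, the forward limit set of $k=(\varsigma,z_2)\in O$ is the finite set $\{(\alpha^j\varsigma,\zeta_1):0\le j<p\}$, not all of $K_1$, so the hypothesis $\bigcap_m cl\{\varphi^i(k):i\ge m\}=K_1$ of Lemma~\ref{l4} is not literally met with your choice of $K_1$, $K_2$; you need to localize to a single orbit closure (which is what the source results in~\cite{Ki1} actually address) rather than apply the lemma globally.

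The genuine gap is in your proof of the inclusion $\sigma_{usf}(T)\subseteq\bigcup_{\varsigma\in\mathds{E}}\mathds{AN}(|b(\varsigma)|,|a(\varsigma)|)$. You derive it from the asymptotics $|w_n(t)|^{1/n}\to a(\varsigma)$ and $|w_n(\varphi^{-n}(t))|^{1/n}\to b(\varsigma)$, but these hold only for $t\in O$. For $t=(\varsigma,\zeta_1)\in K_1$ both quantities tend to $a(\varsigma)$, and since such a $t$ is a periodic point of period $p$ with $|w_p(t)|=a(\varsigma)^p$, Lemma~\ref{l3}(2),(4) puts the entire circle $a(\varsigma)\mathds{T}$ inside $\sigma_{a.p.}(T)=\sigma_{usf}(T)$ for \emph{every} $\varsigma\in\mathds{T}$, including $\varsigma\notin\mathds{E}$; likewise $b(\varsigma)\mathds{T}\subseteq\sigma_{usf}(T)$ for all $\varsigma$. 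These circles need not lie in $\bigcup_{\varsigma\in\mathds{E}}\mathds{AN}(|b(\varsigma)|,|a(\varsigma)|)$: take $p=1$, $\psi$ with attracting fixed point $\zeta_1=-1$ and repelling fixed point $\zeta_2=1$, and $w(z_1,z_2)=3+z_2$, which is invertible in $\mathds{A}^2$. Then $a\equiv 2<4\equiv b$, so $\mathds{E}=\emptyset$ and your claimed equality would give $\sigma_{usf}(T)=\emptyset$, while in fact $2\mathds{T}\cup 4\mathds{T}\subseteq\sigma_{usf}(T)$. So the reverse containment cannot be proved as you state it, and the formula itself must be amended by adjoining $\bigcup_{\varsigma\in\mathds{T}}\bigl(a(\varsigma)\mathds{T}\cup b(\varsigma)\mathds{T}\bigr)$; compare the second line of~(\ref{eq25}) in Proposition~\ref{p9}, where precisely these boundary circles survive when the attracting value is the smaller one. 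Your argument needs to treat the points of $K_1\cup K_2$ separately from those of $O$ when running the characterization of Lemma~\ref{l3}(3).
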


\begin{proof} The proof follows from Lemmas~\ref{l2}, ~\ref{l4}, and~\ref{l5}.

\end{proof}

\begin{proposition} \label{p9}
 Let $\varphi(z_1, z_2) = (\alpha z_1, \psi(z_2))$, where $\alpha$ is not a root of unity and $\psi$ is a hyperbolic Möbius transformation of $\mathds{U}$. Let $\zeta_1, \zeta_2$ be the fixed points of $\psi$. Assume that $|\psi^\prime(\zeta_1)| < 1$. Let $w(z_1, z_2) = z_1^p z_2^q \tilde{w}(z_1, z_2)$, where 
 $\tilde{w}(0,0) \neq 0$.

Let $r = \min{(|\tilde{w}(0, \zeta_1)|, |\tilde{w}(0, \zeta_2)|)}$ and $R = \max{(|\tilde{w}(0, \zeta_1)|,   |\tilde{w}(0, \zeta_2)|)}$.

\begin{enumerate}
  \item If $w$ is invertible in $\mathds{A}^2$, then 
  
  \begin{equation}\label{eq24}
  \sigma(T) = \mathds{AN}(r,R).
  \end{equation}
   
  \begin{equation}\label{eq25}
\sigma_{usf}(T) = \left\{
      \begin{array}{ll}
       \mathds{AN}(r,R), & 
        \hbox{if $|\tilde{w}(0,\zeta_2)| \leq |\tilde{w}(0,\zeta_1)|$ ;} \\
        r\mathds{T} \cup R\mathds{T}, & \hbox{if $ |\tilde{w}(0,\zeta_1)| \leq |\tilde{w}(0,\zeta_2)| $ .}
      \end{array}
    \right.
     \end{equation}

\begin{equation}\label{eq41}
   \sigma_{lsf}(T) \supseteq \left\{
      \begin{array}{ll}
       \mathds{AN}(r,R), & 
        \hbox{if $|\tilde{w}(0,\zeta_1)| \leq |\tilde{w}(0,\zeta_2)|$ ;} \\
        r\mathds{T} \cup R\mathds{T}, & \hbox{if $ |\tilde{w}(0,\zeta_2)| \leq |\tilde{w}(0,\zeta_1)| $ .}
      \end{array}
    \right.
  \end{equation}

    \item If $w$ is not invertible in $\mathds{A}^2$, but invertible in $C(\mathds{T}^2)$, then
   
   \begin{equation}\label{26}
     \sigma(T) = R\mathds{D}.
   \end{equation}
   
   \begin{equation}\label{eq27}
   \sigma_{usf}(T) = \left\{
      \begin{array}{ll}
       \mathds{AN}(r,R), & 
        \hbox{if $|\tilde{w}(0,\zeta_2)| \leq |\tilde{w}(0,\zeta_1)|$ ;} \\
        r\mathds{T} \cup R\mathds{T}, & \hbox{if $ |\tilde{w}(0,\zeta_1)| \leq |\tilde{w}(0,\zeta_2)| $ .}
      \end{array}
    \right.
  \end{equation}

\begin{equation}\label{eq42}
   \sigma_{lsf}(T) \supseteq \left\{
      \begin{array}{ll}
      R \mathds{D}, & 
        \hbox{if $|\tilde{w}(0,\zeta_1)| \leq |\tilde{w}(0,\zeta_2)|$ ;} \\
       r\mathds{D} \cup R\mathds{T} , & \hbox{if $ |\tilde{w}(0,\zeta_2)| \leq |\tilde{w}(0,\zeta_1)| $ .}
      \end{array}
    \right.
  \end{equation}

   \item  If $w$ is not invertible in $C(\mathds{T}^2)$, then
    \begin{equation}\label{eq28}
\begin{split}
   & \sigma(T) = R\mathds{D}, \\
& \sigma_{usf}(T) = |\tilde{w}(0, \zeta_1)|\mathds{D} \cup |\tilde{w}(0, \zeta_2)|\mathds{T},  \\
& \sigma_{lsf}(T) \supseteq |\tilde{w}(0, \zeta_2)|\mathds{D} \cup |\tilde{w}(0, \zeta_1)|\mathds{T}. \\
\end{split}
  \end{equation} 
 \end{enumerate}
\end{proposition}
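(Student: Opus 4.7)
The plan is to apply Lemma~\ref{l4} with the $\varphi$-invariant decomposition $\mathds{T}^2 = K_1 \cup K_2 \cup O$, where $K_i = \mathds{T} \times \{\zeta_i\}$ and $O = \mathds{T} \times (\mathds{T} \setminus \{\zeta_1,\zeta_2\})$. Because $\psi$ is hyperbolic with $|\psi^\prime(\zeta_1)| < 1$, the point $\zeta_1$ is attracting and $\zeta_2$ is repelling for $\psi$, so $\psi^n(z_2) \to \zeta_1$ and $\psi^{-n}(z_2) \to \zeta_2$ for every $z_2 \in \mathds{T} \setminus \{\zeta_1,\zeta_2\}$. Combined with the equidistribution of $\{\alpha^n z_1\}$ on $\mathds{T}$ (since $\alpha$ is not a root of unity), the forward $\omega$-limit and backward $\alpha$-limit of any point $k \in O$ are exactly $K_1$ and $K_2$, so the hypotheses of Lemma~\ref{l4} hold.

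Next I would compute the boundary spectral radii using Lemma~\ref{l5}. The restriction of $\varphi$ to $K_i$ is the irrational rotation $z_1 \mapsto \alpha z_1$, which is uniquely ergodic with normalised Lebesgue measure $m_i$; applying Lemma~\ref{l5} to both $T$ and $T^{-1}$ (whose unique invariant probability measures on $K_i$ both coincide with $m_i$) yields
\begin{equation*}
\rho(T,C(K_i)) = \rho_{min}(T,C(K_i)) = \exp \frac{1}{2\pi}\int_0^{2\pi} \log|\tilde{w}(e^{i\theta},\zeta_i)|\, d\theta.
\end{equation*}
Since $\tilde{w}(\cdot,\zeta_i)$ lies in the disc algebra $\mathds{A}^1$ (uniform continuity of $\tilde{w}$ on $\mathds{D}^2$ realises it as a locally uniform limit of analytic slices), and since the invertibility hypotheses in each case prevent it from vanishing in $\mathds{U}$, Jensen's formula collapses the integral to $\log|\tilde{w}(0,\zeta_i)|$. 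Set $R_i := |\tilde{w}(0,\zeta_i)|$, so that $\{R_1,R_2\} = \{r,R\}$.

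With the boundary radii identified, in case~(1) Lemma~\ref{l3}(1) gives that $\sigma(T)$ is connected and rotation-invariant, Lemma~\ref{l2}(3) supplies $\sigma(T) = \sigma_{sf}(T)$, and the circles $R_i\mathds{T}$ pin $\sigma(T)$ down to $\mathds{AN}(r,R)$; Lemma~\ref{l4}(1) and~\ref{l4}(2), applied in the subcases $R_2 < R_1$ and $R_1 < R_2$ respectively, then yield the claimed formulas for $\sigma_{usf}$ and $\sigma_{lsf}$, the remaining semi-Fredholm spectrum being reduced to the two boundary circles by a direct application of Lemma~\ref{l3}. Case~(2) differs only in that $w$ has infinitely many zeros in $\mathds{U}^2$, forcing $def \, T = \infty$ and hence $\sigma(T) = \sigma_{lsf}(T) = R\mathds{D}$ by Lemma~\ref{l2}(3), while the $\sigma_{usf}$ picture is unchanged because $w$ remains invertible on each $K_i$. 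In case~(3), the irrationality of $\alpha$ together with the existence of some $k \in \mathds{T}^2$ with $w(k) = 0$ allows me to pick $k \in O$ whose $\varphi$-orbit meets the zero set of $w$ in only finitely many points; Lemma~\ref{l4}(4) then delivers $|\tilde{w}(0,\zeta_1)|\mathds{D} \subseteq \sigma_{usf}(T)$ and $|\tilde{w}(0,\zeta_2)|\mathds{D} \subseteq \sigma_{lsf}(T)$, and union with the $K_i$-circles gives~(\ref{eq28}).

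The principal obstacle is the Jensen-type identification $\rho(T,C(K_i)) = |\tilde{w}(0,\zeta_i)|$ in cases~(2) and~(3), where $\tilde{w}(\cdot,\zeta_i)$ could a priori still have zeros in $\mathds{U}$ even when it does not vanish on $\mathds{T}$; this requires careful use of the hypothesis $\tilde{w}(0,0) \neq 0$ together with the specific factorisation and invertibility assumptions. A secondary difficulty is justifying the containments for $\sigma_{lsf}$ in~(\ref{eq41}),~(\ref{eq42}), and~(\ref{eq28}) rigorously via the duality $\sigma_{lsf}(T) = \sigma_{usf}(T^\prime)$ and a dual version of Lemma~\ref{l4} on the boundary-measure side.
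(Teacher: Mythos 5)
Your route is the one the paper itself indicates --- the decomposition $\mathds{T}^2 = K_1\cup K_2\cup O$ with $K_i=\mathds{T}\times\{\zeta_i\}$, Lemma~\ref{l4} for the semi-Fredholm inclusions, and Lemma~\ref{l5} together with unique ergodicity of the irrational rotation for the radii --- and since the published proof is only a citation of Lemmas~\ref{l2}, \ref{l4}, \ref{l5} and \cite{Ki1}, in structure you have reconstructed it. The genuine gap is the one you half-acknowledge at the end: the claim that ``the invertibility hypotheses in each case prevent $\tilde w(\cdot,\zeta_i)$ from vanishing in $\mathds{U}$'' is false in cases (2) and (3). Invertibility in $C(\mathds{T}^2)$ controls the slice only on $\mathds{T}$, and $\tilde w(0,0)\neq 0$ does not control it in $\mathds{U}$. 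Take $\tilde w(z_1,z_2)=1-2z_1z_2$ with $p=q=0$: then $\tilde w(0,0)=1$ and $|\tilde w|\geq 1$ on $\mathds{T}^2$, but the slice $1-2\zeta_i z_1$ vanishes at $z_1=\bar\zeta_i/2\in\mathds{U}$, so Jensen gives $\rho(T,C(K_i))=\exp\frac{1}{2\pi}\int_0^{2\pi}\log|\tilde w(e^{i\theta},\zeta_i)|\,d\theta=2$, while $|\tilde w(0,\zeta_i)|=1$; already (\ref{26}) fails, since Lemma~\ref{l5} applied to the Lebesgue measure on $K_1$ forces $\rho(T)\geq 2$. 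So the identification $\rho(T,C(K_i))=|\tilde w(0,\zeta_i)|$ is not derivable from the stated hypotheses: you must either read the factorization $w=z_1^pz_2^q\tilde w$ as implicitly requiring $\tilde w$ to be invertible in $\mathds{A}^2$ (which is how it functions in Propositions~\ref{p1} and~\ref{p2}), or carry the Jensen means themselves as the radii. This defect is inherited from the statement rather than introduced by you, but your write-up asserts the false implication instead of isolating the needed hypothesis.

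Two smaller points. Your worry about needing a ``dual version'' of Lemma~\ref{l4} via $\sigma_{lsf}(T)=\sigma_{usf}(T^\prime)$ is unnecessary: items (2), (3) and (4) of that lemma already state the lower-semi-Fredholm inclusions directly, so (\ref{eq41}), (\ref{eq42}) and the last line of (\ref{eq28}) need no passage to the adjoint. Conversely, your application of Lemma~\ref{l4}(4) in case (3) yields $\rho(T,C(K_1))\mathds{D}\subseteq\sigma_{usf}(T)$ with $K_1$ the attracting circle, whereas the lemma as printed asserts $\rho(T,C(K_2))\mathds{D}\subseteq\sigma_{usf}(T)$ with $K_2$ the backward limit set. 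Your version is the one consistent with the finite-string construction (the string along the forward orbit of a zero of $w$ terminates cleanly because $w(k)=0$, and its leak is governed by the weight averages over the attractor) and with (\ref{eq28}); you have silently corrected a transposition of indices in the lemma, and that should be said explicitly rather than left as a mismatch between the tool and its use.
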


\begin{proof} The proof follows from Lemmas~\ref{l2}, ~\ref{l4}, ~\ref{l5}, and from the results in~\cite{Ki1}. 
  
\end{proof}

\begin{proposition} \label{p10}
 Let $\varphi(z_1, z_2) = (\phi(z_1), \psi(z_2))$, where $\phi$ and $\psi$ are a parabolic and a hyperbolic Möbius transformations of $\mathds{U}$, respectively. Let $\zeta_1, \zeta_2$ be the fixed points of $\psi$. Assume that $|\psi^\prime(\zeta_1)| < 1$. Let $\varsigma$ be the fixed point of $\varphi$.

\noindent Let $r = \min{(|w(\varsigma, \zeta_1)|, |w(\varsigma, \zeta_2)|)}$ and
$R =\max{(|w(\varsigma, \zeta_1)|, |w(\varsigma, \zeta_2)|)}$.

\begin{enumerate}
  \item If $w$ is invertible in $\mathds{A}^2$, then
    \begin{equation}\label{eq29}
  \sigma(T) = \mathds{AN}(r,R).
  \end{equation}
 
    \begin{equation}\label{eq30}
   \sigma_{usf}(T) = \left\{
      \begin{array}{ll}
       \mathds{AN}(r,R), & 
        \hbox{if $|w(\varsigma,\zeta_2)| \leq |w(\varsigma,\zeta_1)|$ ;} \\
        r\mathds{T} \cup R\mathds{T}, & \hbox{if $ |w(\varsigma,\zeta_1)| \leq |w(\varsigma,\zeta_2)| $ .}
      \end{array}
    \right.
  \end{equation}

 \begin{equation}\label{eq43}
   \sigma_{lsf}(T) \supseteq \left\{
      \begin{array}{ll}
       \mathds{AN}(r,R), & 
        \hbox{if $|w(\varsigma,\zeta_1)| \leq |w(\varsigma,\zeta_2)|$ ;} \\
        r\mathds{T} \cup R\mathds{T}, & \hbox{if $ |w(\varsigma,\zeta_2)| \leq |w(\varsigma,\zeta_1)| $ .}
      \end{array}
    \right.
  \end{equation}

   \item If $w$ is not invertible in $\mathds{A}^2$, but invertible in $C(\mathds{T}^2)$, then
   
   \begin{equation}\label{31}
     \sigma(T) = R\mathds{D}.
   \end{equation}
   
   \begin{equation}\label{eq32}
   \sigma_{usf}(T) = \left\{
      \begin{array}{ll}
       \mathds{AN}(r,R), & 
        \hbox{if $|w(\varsigma,\zeta_2)| \leq |w(\varsigma,\zeta_1)|$ ;} \\
        r\mathds{T} \cup R\mathds{T}, & \hbox{if $ |w(\varsigma,\zeta_1)| \leq |w(\varsigma,\zeta_2)| $ .}
      \end{array}
    \right.
  \end{equation}

\begin{equation}\label{eq35}
  \sigma_{lsf}(T) \supseteq \left\{
      \begin{array}{ll}
       R\mathds{D}, & 
        \hbox{if $|w(\varsigma,\zeta_1)| \leq |w(\varsigma,\zeta_2)|$ ;} \\
        r\mathds{D} \cup R\mathds{T}, & \hbox{if $ |w(\varsigma,\zeta_2)| \leq |w(\varsigma,\zeta_1)| $ .}
      \end{array}
    \right.
\end{equation}
    
  \item  If $w$ is not invertible in $C(\mathds{T}^2)$, then
  
  \begin{equation}\label{eq33}
    \sigma(T) = R\mathds{D}. 
  \end{equation}
If $|w(\varsigma,\zeta_2)| \leq |w(\varsigma,\zeta_1)|$, then
\begin{equation}\label{36}
  \begin{split}
  & \sigma_{usf}(T) = R\mathds{D}, \\
& \sigma_{lsf}(T) \supseteq r\mathds{D} \cup R\mathds{T}.\\
\end{split}
\end{equation}
If $|w(\varsigma,\zeta_1)| \leq |w(\varsigma,\zeta_2)|$, then
 \begin{equation}\label{37}
  \begin{split}
  & \sigma_{lsf}(T) = R\mathds{D}, \\
& \sigma_{usf}(T) = r\mathds{D} \cup R\mathds{T}. \\
\end{split}
\end{equation}
 \end{enumerate}
\end{proposition}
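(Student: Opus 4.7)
The plan is to follow the template established by Propositions~\ref{p8} and~\ref{p9}, combining Lemmas~\ref{l2}, \ref{l4}, \ref{l5} together with the results of~\cite{Ki1}. I first identify the $\varphi$-invariant probability measures on the Shilov boundary $\mathds{T}^2$. The parabolic $\phi$ admits $\delta_\varsigma$ as its unique invariant probability measure on $\mathds{T}$, while the hyperbolic $\psi$ admits only convex combinations of $\delta_{\zeta_1}$ and $\delta_{\zeta_2}$. Projecting on the two factors shows that every $\varphi$-invariant probability measure on $\mathds{T}^2$ has the form $\alpha\,\delta_{(\varsigma,\zeta_1)} + (1-\alpha)\,\delta_{(\varsigma,\zeta_2)}$, so Lemma~\ref{l5} yields $\rho(T)=R$, and the same argument applied to $T^{-1}$ gives $\rho_{\min}(T)=r$ in case~(1).

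Next I would verify the hypotheses of Lemma~\ref{l2}: $\mathds{T}^2$ is connected, and $\phi$ and $\psi$ are pointwise limits on $\mathds{T}$ of elliptic Möbius maps that are finite-order rotations about $\varsigma$ and about one of the $\zeta_i$, producing periodic automorphisms $\varphi_n$ of $\mathds{A}^2$; since the $\varphi$-invariant probability measures are supported at the common fixed points of all $\varphi_n$, they are automatically $\varphi_n$-invariant. Lemma~\ref{l2} then supplies $\sigma_{a.p.}(T)=\sigma_{sf}(T)$ together with connectedness and rotation invariance. This forces $\sigma(T)=\mathds{AN}(r,R)$ in case~(1) and $\sigma(T)=R\mathds{D}$ in cases~(2) and~(3), the inner disc in the latter two cases being filled by the residual spectrum (as in the proof of Proposition~\ref{p5}, the zero set of $w$ in $\mathds{U}^2$ forces $\mathrm{def}\,T=\infty$).

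To bound the semi-Fredholm spectra below I apply Lemma~\ref{l4} with $K_1=\{(\varsigma,\zeta_1)\}$, $K_2=\{(\varsigma,\zeta_2)\}$, and $O=\mathds{T}^2\setminus(K_1\cup K_2)$. For generic $k=(z_1,z_2)\in O$ the forward $\varphi$-orbit converges to $(\varsigma,\zeta_1)$ and the backward orbit to $(\varsigma,\zeta_2)$, so the cluster-set hypothesis holds. Since $K_1,K_2$ are $\varphi$-fixed singletons, $\rho(T,C(K_i))=\rho_{\min}(T,C(K_i))=|w(\varsigma,\zeta_i)|$, and parts~(1)--(2) of Lemma~\ref{l4} yield the annular inclusions in~(\ref{eq30}), (\ref{eq43}), (\ref{eq32}), and~(\ref{eq35}) according to which of $|w(\varsigma,\zeta_1)|$, $|w(\varsigma,\zeta_2)|$ is larger. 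In case~(3), where $w$ has zeros on $\mathds{T}^2$ and one can choose such a zero $k\in O$ whose $\varphi$-orbit meets the zero set only finitely often, part~(4) of Lemma~\ref{l4} upgrades the annular inclusions to the full discs asserted in~(\ref{36}) and~(\ref{37}).

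The reverse inclusions for $\sigma_{usf}(T)=\sigma_{a.p.}(T)$, for example in the subcase $|w(\varsigma,\zeta_1)|\le|w(\varsigma,\zeta_2)|$, come from Lemma~\ref{l3}(3)-(4): given $r<|\lambda|<R$, one argues that for every $t\in\mathds{T}^2$ one of the sequences $|w_n(t)|^{1/n}$ or $|w_n(\varphi^{-n}(t))|^{1/n}$ eventually violates the requisite inequality, since along $\varphi$-orbits the running averages of $\log|w|$ tend to $\log|w(\varsigma,\zeta_1)|=\log r$ forward and to $\log|w(\varsigma,\zeta_2)|=\log R$ backward. The principal technical obstacle is the presence of exceptional orbits on $\mathds{T}\times\{\zeta_1\}$ and $\mathds{T}\times\{\zeta_2\}$ whose forward and backward clusters collapse onto a single $K_i$, placing them outside the literal setup of Lemma~\ref{l4}; controlling them requires either a restriction argument to the analytic subalgebra obtained by freezing the second variable at $\zeta_i$, or the direct construction of singular sequences from~\cite{Ki1}, to verify that these orbits contribute nothing beyond the circles $r\mathds{T}$ and $R\mathds{T}$ already accounted for.
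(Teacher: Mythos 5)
Your argument follows the paper's proof in essentially the same way: the paper's own justification is a one-line appeal to Lemmas~\ref{l2}, \ref{l4}, \ref{l5} and the results of~\cite{Ki1}, and your identification of the invariant measures $\alpha\,\delta_{(\varsigma,\zeta_1)}+(1-\alpha)\,\delta_{(\varsigma,\zeta_2)}$, your choice of $K_1$, $K_2$, $O$, and your use of Lemma~\ref{l3} for the reverse inclusions supply precisely the details that citation leaves implicit. The exceptional orbits in $\mathds{T}\times\{\zeta_1\}$ and $\mathds{T}\times\{\zeta_2\}$ that you flag are a genuine subtlety in the cluster-set hypothesis of Lemma~\ref{l4} which the paper passes over in silence, and the workarounds you propose (restriction to the slice subalgebras or direct construction of singular sequences as in~\cite{Ki1}) are the appropriate ways to close it.
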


\begin{proof} The proof follows from Lemmas~\ref{l2}, ~\ref{l4}, \ref{l5} and from the results in~\cite{Ki1}. 
\end{proof}
It remains to consider the case when $\varphi(z_1, z_2) = (\phi(z_1), \psi(z_2))$, where $\phi$ and $\psi$ are hyperbolic Möbius transformations of $\mathds{U}$. We denote the attracting fixed points of $\phi$ and $\psi$ as $\varsigma_1, \zeta_1$ and the repelling points as $\varsigma_2, \zeta_2$, respectively. Let $w \in \mathds{A}^2$ and let
\begin{equation}\label{eq38}
  Tf(z_1,z_2) =w(z_1,z_2)f(\phi(z_1), \psi(z_2)), f \in \mathds{A}^2, (z_1,z_2) \in \mathds{D}^2.
\end{equation}
Let $F$ be the set of all four fixed points of $\varphi$ and let $r = \min \limits_{(z_1,z_2) \in F} |w(z_1, z_2)|$, $R = \max \limits_{(z_1,z_2) \in F} |w(z_1, z_2)|$. Then,
\begin{equation}\label{39}
\sigma(T) =  \left\{
    \begin{array}{ll}
      \mathds{AN}(r,R), & \hbox{if $w$ is invertible in $\mathds{A}^2$,} \\
      R\mathds{D}, & \hbox{if  $w$ is not invertible in $\mathds{A}^2$ .}
    \end{array}
  \right.
\end{equation}
 The partial description of the essential spectra of $T$ depends on the order of the numbers 
$|w(z_1, z_2)|, (z_1, z_2) \in F$ on the real line and on invertibility or non-invertibility of the weight $w$ in $\mathds{A}^2$ or in $C(\mathds{T}^2)$, and therefore there are 72 distinct cases.
One of these cases is considered in the following proposition.

\begin{proposition} \label{p11}
 Let $T$ be the operator of the form~(\ref{eq38}). Assume that $w$ is invertible in $\mathds{A}^2$. Let $r = |w(\varsigma_1, \zeta_1)| < |w(\varsigma_1, \zeta_2)| < |w(\varsigma_2, \zeta_1)| < |w(\varsigma_2, \zeta_2)| = R$. Then,
\begin{equation}\label{eq40}
\begin{split}
  & \sigma_{lsf}(T) = \sigma(T) = \mathds{AN}(r,R), \\
& \sigma_{usf}(T) = r\mathds{T} \cup |w(\varsigma_1, \zeta_2)|\mathds{T} 
\cup |w(\varsigma_2, \zeta_1)|\mathds{T} \cup  R\mathds{T}. \\
\end{split}
\end{equation}
\end{proposition}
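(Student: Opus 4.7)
My plan is to prove~(\ref{eq40}) in three stages, following the template of Propositions~\ref{p8}--\ref{p10}. First, for $\sigma(T)=\mathds{AN}(r,R)$: the ergodic $\varphi$-invariant probability measures on $\mathds{T}^2$ are precisely the four Dirac masses at the fixed points of $\varphi$, since each of $\phi$ and $\psi$, being hyperbolic, admits only Dirac measures at its two fixed points as ergodic invariant measures on $\mathds{T}$, and the product measures are the ergodic measures in this setting. Lemma~\ref{l5} applied to $T$ and to $T^{-1}$ then gives $\rho(T)=R$ and $\rho_{min}(T)=r$, and Lemma~\ref{l3}(1) forces the closed, connected, rotation-invariant set $\sigma(T)$ to equal $\mathds{AN}(r,R)$.

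Second, for $\sigma_{usf}(T)$: by Lemma~\ref{l3}(4) we have $\sigma_{usf}(T)=\sigma_{a.p.}(T)$, and I would apply the characterization in Lemma~\ref{l3}(3). For each $t\in\mathds{T}^2$ the Birkhoff limits $|w_n(t)|^{1/n}$ and $|w_n(\varphi^{-n}(t))|^{1/n}$ are the $|w|$-values at the attracting and repelling fixed points of the invariant stratum of $\mathds{T}^2$ containing $t$ (the four fixed points, the four one-dimensional invariant circles $\{\varsigma_i\}\times\mathds{T}$ and $\mathds{T}\times\{\zeta_j\}$, or the bulk). Under the strict ordering $r<|w(\varsigma_1,\zeta_2)|<|w(\varsigma_2,\zeta_1)|<R$ hypothesized in Proposition~\ref{p11}, in every non-fixed case the forward limit is strictly smaller than the backward limit, so the two inequalities in Lemma~\ref{l3}(3) are incompatible. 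Only the four fixed points contribute, and by rotation invariance $\sigma_{usf}(T)$ is the union of the four claimed circles.

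Third, for $\mathds{AN}(r,R)\subseteq\sigma_{lsf}(T)$: the four circles come from Lemma~\ref{l3}(5) applied at each fixed point. To fill in the three intermediate open annuli I plan to apply Lemma~\ref{l4}(2) with successive $(K_1,K_2)$ pairs that isolate the three consecutive gaps in the increasing sequence of $|w|$-values at the fixed points. In each case the spectral radii $\rho(T,C(K_j))$ are computed by Lemma~\ref{l5}, the inequality $\rho(T,C(K_1))<\rho_{min}(T,C(K_2))$ is arranged, and the corresponding sub-annulus is placed in $\sigma_{lsf}(T)$; the (closed) union covers $\mathds{AN}(r,R)$, and the reverse inclusion is automatic from $\sigma(T)=\mathds{AN}(r,R)$.

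The principal obstacle is verifying the $\omega$-limit hypothesis $\bigcap_m \mathrm{cl}\{\varphi^i(k):i\geq m\}=K_1$ of Lemma~\ref{l4}, because in our setting the forward $\omega$-limit of a generic point of $\mathds{T}^2$ is a single fixed point rather than the larger closed invariant set $K_1$ one wishes to use. I expect to handle this by applying Lemma~\ref{l4} to restrictions of $T$ to invariant subalgebras associated with the one-dimensional invariant circles $\{\varsigma_i\}\times\mathds{T}$ and $\mathds{T}\times\{\zeta_j\}$, on which $T$ becomes a weighted hyperbolic automorphism of the disc algebra treated in~\cite{Ki1,KO}, and then gluing the partial spectral information; alternatively, one can pass to $C(\mathds{T}^2)$, use that $\sigma_{usf}(T,\mathds{A}^2)=\sigma_{usf}(T,C(\mathds{T}^2))$ by Lemma~\ref{l3}(2)(4), and invoke constancy of the Fredholm index on the connected components of $\mathds{C}\setminus\sigma_{sf}(T)$ to force $\mathrm{codim}\,(\lambda I-T)(\mathds{A}^2)=\infty$ for every $\lambda$ in the three middle annuli.
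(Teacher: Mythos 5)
Your outline is correct, and it is essentially an expanded version of the paper's one-line proof (which only cites Lemmas~\ref{l4} and~\ref{l5}). Stage 1 (the ergodic $\varphi$-invariant measures on $\mathds{T}^2$ are the four Dirac masses at the fixed points, so Lemma~\ref{l5} gives $\rho(T)=R$, $\rho_{min}(T)=r$, and connectedness plus rotation invariance from Lemma~\ref{l3} forces $\sigma(T)=\mathds{AN}(r,R)$) and Stage 2 (Lemma~\ref{l3}(3)--(4): for every non-fixed $t\in\mathds{T}^2$ the forward limit of $|w_n(t)|^{1/n}$ is strictly smaller than the backward limit of $|w_n(\varphi^{-n}(t))|^{1/n}$ under the hypothesized ordering, so only the four fixed points can witness membership in $\sigma_{a.p.}(T)$) are both sound; note that this part genuinely requires Lemma~\ref{l3}, which the paper does not cite for this proposition. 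For Stage 3 you correctly identify the real obstacle: Lemma~\ref{l4} cannot be applied on $\mathds{T}^2$ directly, because the $\omega$-limit set of a point of $O$ is a single fixed point that varies with the stratum. Your first fix is the right one and should be kept: restriction of $\mathds{A}^2$ to an invariant disc, say $\{\varsigma_2\}\times\mathds{D}$, is a surjective homomorphism onto the disc algebra intertwining $T$ with a hyperbolic weighted automorphism $\tilde T$; since a surjection carries a finite-codimensional range onto a finite-codimensional range, $\sigma_{lsf}(\tilde T)\subseteq\sigma_{lsf}(T)$, and Lemma~\ref{l4}(2) applies on the circle, where its hypotheses do hold. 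In fact two circles suffice: $\mathds{T}\times\{\zeta_1\}$ yields $\mathds{AN}(r,|w(\varsigma_2,\zeta_1)|)$ and $\{\varsigma_2\}\times\mathds{T}$ yields $\mathds{AN}(|w(\varsigma_2,\zeta_1)|,R)$, whose union is all of $\mathds{AN}(r,R)$. By contrast, your ``alternative'' route via constancy of the Fredholm index does not close: the three intermediate open annuli are separate connected components of $\mathds{C}\setminus\sigma_{usf}(T)$, disconnected from the resolvent set, so index constancy gives no information there; drop that alternative and retain the quotient-algebra argument.
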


\begin{proof}
  The proof follows from Lemmas~\ref{l4} and~\ref{l5}.
\end{proof}

\end{document}